\newtheorem{lemma}{Lemma}
\newtheorem{theorem}{Theorem}
\newcommand{\dotoplus}{ \mathbin{\dot{\oplus}} }
\newcommand
{\sMat}
[4]
{
	\bigl(
	\begin{smallmatrix}
		#1 \mathstrut
		&
		#2 \mathstrut
		\\
		#3 \mathstrut
		&
		#4 \mathstrut
	\end{smallmatrix}
	\bigr)
}
\DeclareMathOperator{\mat}{M}
\DeclareMathOperator{\Cent}{C}
\DeclareMathOperator{\Ker}{Ker}
\DeclareMathOperator{\Aut}{Aut}
\DeclareMathOperator{\Spec}{Spec}
\DeclareMathOperator{\Hom}{Hom}
\DeclareMathOperator{\Norm}{N}
\DeclareMathOperator{\Trace}{T}
\DeclareMathOperator{\Bimul}{U}
\newcommand{\e}{\mathrm{e}}
\newcommand
{\inv}
[1]
{
	\!\;
	\overline{
		\!\!\: #1 \vphantom{!} \!\!\:
	}
	\;\!
}
\newcommand{\up}[2]{ { ^{#1} \! {#2} } }
\newcommand
{\TitsIndex}
[5]
{
	\up{#2}{ \mathsf{#1}_{#3, #4}^{#5} }
}
\title{
	Weyl elements in isotropic reductive groups
}
\author{
	Egor Voronetsky%
	\thanks{
		This work was performed at the Saint Petersburg Leonhard Euler International Mathematical Institute and supported by the Ministry of Science and Higher Education of the Russian Federation (agreement no. 075--15--2025--343).
	} \\
	Saint Petersburg University, \\
	7/9 Universitetskaya nab., \\
	St. Petersburg, 199034 Russia
}
\begin{document}
	\maketitle
	
	\begin{abstract}
		We study Weyl elements in isotropic reductive groups over commutative rings. Our main result in an explicit formula for squares of such elements. We also classify these elements in rank one groups and prove basic properties of their loci.
	\end{abstract}

	\section{Introduction}
	
	Throughout this paper
	\( \Phi \) denotes an irreducible crystallographic root system, not necessarily reduced. In other words, the type of
	\( \Phi \) is one of
	\( \mathsf{A}_\ell \),
	\( \mathsf{B}_\ell \),
	\( \mathsf{C}_\ell \),
	\( \mathsf{BC}_\ell \),
	\( \mathsf{D}_\ell \),
	\( \mathsf{E}_6 \),
	\( \mathsf{E}_7 \),
	\( \mathsf{E}_8 \),
	\( \mathsf{F}_4 \),
	\( \mathsf{G}_2 \).
	
	A group
	\( G \) is called
	\( \Phi \)-graded for a root system
	\( \Phi \) if there are
	\textit{root subgroups}
	\( U_\alpha \leq G \) for
	\( \alpha \in \Phi \) such that
	\begin{itemize}
		
		\item
		\( U_{2 \alpha} \leq U_\alpha \) if both
		\( \alpha \) and
		\( 2 \alpha \) are roots (so the type of
		\( \Phi \) is
		\( \mathsf{BC}_\ell \));
		
		\item
		\(
		[ U_\alpha, U_\beta ]
		\leq
		\bigl\langle
		U_{ i \alpha + j \beta }
		\mid
		i \alpha + j \beta \in \Phi;\,
		i, j > 0
		\bigr\rangle
		\) for non-antiparallel
		\( \alpha \) and
		\( \beta \);
		
		\item
		if
		\( \Phi^{+} \subset \Phi \) is a subset of positive roots, then the multiplication map
		\(
		\prod_{ \alpha \in \Phi^{+} \setminus 2 \Phi^{+} }
		U_\alpha
		\to
		\bigl\langle
		U_\alpha
		\mid
		\alpha \in \Phi^{+}
		\bigr\rangle
		\) is one-to-one;
		
		\item
		for all
		\( \alpha \in \Phi \) there exist
		\textit{%
			\( \alpha \)-Weyl elements%
		}.
		
	\end{itemize}
	
	In this setting an element
	\( w \in U_\alpha U_{- \alpha} U_\alpha \) is called
	\( \alpha \)-Weyl if
	\( \up{w}{U_\beta} = U_{ s_\alpha(\beta) } \) for all
	\( \beta \), where
	\(
	s_\alpha(\beta)
	=
	\beta
	-
	2 \alpha (\alpha \cdot \beta) / (\alpha \cdot \alpha)
	\) is the Euclidean reflection determined by
	\( \alpha \).
	
	There is also a slightly more general notion of root gradings by not necessarily crystallographic root systems
	\cite{%
		muhl-weiss,
		wiedemann%
	}.
	
	There is a large class of root graded groups coming from algebraic geometry. Namely, let
	\( K \) be a non-zero unital commutative ring and
	\( G \) be an isotropic (in a suitable sense) reductive group scheme over
	\( K \). For simplicity we assume that the root datum of the split form of
	\( G \) is constant on
	\( \Spec(K) \) and the corresponding
	\textit{absolute} root system
	\( \widetilde{\Phi} \) is irreducible. Then the point group
	\( G(K) \) is root graded by the
	\textit{relative} root system
	\( \Phi \), we call it an
	\textit{isotropic reductive group}. The class of all such groups contains finite simple groups of Lie type (up to isogeny and excluding Suzuki and Ree groups, though they are also root graded), non-compact real reductive Lie groups, and Chevalley groups over arbitrary rings such as
	\( \mathrm{SL}(n, K) \) and
	\( \mathrm{SO}(2 n, K) \).
	
	We are interested in description and properties of Weyl elements in an isotropic reductive group
	\( G(K) \). Such elements are well studied for Chevalley groups
	\( \mathrm{G}(\Phi, K) \) and the corresponding Steinberg groups
	\( \mathrm{St}(\Phi, K) \), see e.g.
	\cite{allcock}. If the rank of
	\( \Phi \) is at least
	\( 3 \) or the base ring
	\( K \) is sufficiently close to a field, then there is a general classification of all
	\( \Phi \)-graded groups
	\cite{%
		h3-h4,
		muhl-weiss,
		h2,
		tits-polygons,
		shi,
		timmesfeld,
		root-graded,
		wiedemann,
		zhang%
	}, including an explicit description of Weyl elements.
	
	Our work solves an open problem appearing in the classification of isomorphisms between isotropic reductive groups
	\cite[remark 4.4]{gvozdevsky}. Namely, we prove the
	\textit{square formula} (theorem
	\ref{squares}) in isotropic reductive groups,
	\[
	\up{w^2}{u}
	=
	\begin{cases}
		u,
		& 2 (\alpha \cdot \beta) / (\alpha \cdot \alpha)
		\text{ is even},
		\\
		u^{- 1},
		& 2 (\alpha \cdot \beta) / (\alpha \cdot \alpha)
		\text{ is odd and } 2 \beta \notin \Phi,
		\\
		u \cdot (- 1),
		& 2 (\alpha \cdot \beta) / (\alpha \cdot \alpha)
		\text{ is odd and } 2 \beta \in \Phi
	\end{cases}
	\]
	for
	\( \alpha \)-Weyl elements
	\( w \) and all elements
	\( u \in U_\beta \), where
	\( 2 \alpha \notin \Phi \) and
	\( u \cdot (- 1) \) is a suitable operation defined below. The square formula is already known for groups of rank at least
	\( 3 \) and in the simply laced case by
	\cite[propositions 5.4.15, 7.6.15, 9.5.13]{wiedemann}. We also describe all
	\( \alpha \)-Weyl elements for
	\( 2 \alpha \notin \Phi \), prove their existence (locally in Zariski topology) and some basic properties in theorem
	\ref{weyl}, and explicitly calculate their squares in the main theorem
	\ref{squares}. Moreover, we find normalizers of long root subgroups in the previously unknown rank
	\( 1 \) case in theorem
	\ref{long-norm} and construct the canonical split torus with a root datum in theorem
	\ref{split-torus} and lemma
	\ref{coroots}, this is an analogue of a maximal split tori in reductive group schemes over local rings.
	
	It is also worth to recall that if
	\( \alpha \) and
	\( \beta \) are two basic roots with
	\( \angle(\alpha, \beta) = \pi - \pi / m \) and
	\( w_\alpha \),
	\( w_\beta \) are corresponding Weyl elements in an abstract root graded group, then the
	\textit{braid relations} hold
	\cite[theorem 2.2.34]{wiedemann}, i.e.
	\[
	w_\alpha w_\beta w_\alpha \cdots
	=
	w_\beta w_\alpha w_\beta \cdots,
	\]
	where both sides contain
	\( m \) factors. In particular, if
	\( m = 2 \), then these Weyl elements commute. Together with the square formula this allows us to define the
	\text{extended Weyl group} as in the case of Chevalley groups, i.e. an abstract group with the generators
	\( w_\alpha \) for basic
	\( \alpha \) and only braid relations and
	\( \up{ w_\alpha^2 }{ w_\beta } = w_\beta^{\pm 1} \). Another important property is that if
	\( w = g_1 g_2 g_3 \) is an
	\( \alpha \)-Weyl element for
	\( g_1, g_2 \in U_\alpha \) and
	\( g_2 \in U_{- \alpha} \), then
	\( w^{- 1} = g_3^{- 1} g_2^{- 1} g^{- 1} \) is an
	\( \alpha \)-Weyl element and
	\( w = g_2 g_3 g_1^w = \up{w}{g_3} g_1 g_2 \) is also an
	\( (- \alpha) \)-element
	\cite[proposition 2.2.6]{wiedemann}.
	
	Along the way we also generalize a construction of the group scheme
	\( \mathbb{E}_7^{\mathrm{sc}} \) using Albert algebras from the field case
	\cite{muh-wei-e7} to arbitrary commutative rings. In principle one can also work with this group scheme using its abstract
	\( 56 \)-dimensional presentation as in
	\cite{vav-luz}, but this leads to cumbersome formulae involving large matrices. There is also a construction of
	\( \mathbb{E}_7^{\mathrm{sc}} \) if
	\( 2 \) is invertible using Brown algebras
	\cite{%
		alsaody,
		gar-pet-rac%
	}.

	\section{Isotropic reductive groups}
	
	Let
	\( K \) be a non-zero unital commutative ring and
	\( \widetilde{\Phi} \) be a reduced root system. Choose a weight lattice
	\[
	\textstyle
	\mathbb{Z} \widetilde{\Phi}
	\leq
	\Lambda
	\leq
	\bigl\{
	x
	\mid
	\forall \alpha \in \widetilde{\Phi} \enskip
	2 (x \cdot \alpha) / (\alpha \cdot \alpha)
	\in
	\mathbb{Z}
	\bigr\}
	\]
	and consider the Chevalley--Demazure group scheme
	\( \mathrm{G}^\Lambda(\widetilde{\Phi}, {-}) \) over
	\( K \). We denote its standard maximal torus by
	\( \widetilde{T} \) and its standard root subgroups by
	\( \widetilde{U}_\alpha \) for
	\( \alpha \in \widetilde{\Phi} \). Now choose a surjective map
	\(
	u
	\colon
	\widetilde{\Phi} \sqcup \{ 0 \}
	\to
	\Phi \sqcup \{ 0 \}
	\) arising from one of irreducible Tits indices
	\cite[table II]{index-field} and let
	\[
	L
	=
	\bigl\langle
	\widetilde{T}, \widetilde{U}_\alpha
	\mid
	u(\alpha) = 0
	\bigr\rangle,
	\quad
	U_\alpha
	=
	\bigl\langle
	\widetilde{U}_{\alpha'}
	\mid
	u(\alpha') \in \{ \alpha, 2 \alpha \}
	\bigr\rangle
	\]
	be group subsheaves of
	\( \mathrm{G}^\Lambda(\widetilde{\Phi}, {-}) \), they are closed subschemes. More precisely,
	\( L \) is split reductive with a maximal torus
	\( \widetilde{T} \) and
	\( U_\alpha \) are unipotent in a suitable sense.
	
	In this paper we say that a group scheme
	\( G \) together with a family of group subschemes
	\( L, U_\alpha \leq G \) for
	\( \alpha \in \Phi \) is an
	\textit{isotropic reductive group scheme} if fppf locally it is isomorphic to
	\( \mathrm{G}^\Lambda(\widetilde{\Phi}, {-}) \) with the standard group subschemes constructed by a map
	\(
	u
	\colon
	\widetilde{\Phi} \sqcup \{ 0 \}
	\to
	\Phi \sqcup \{ 0 \}
	\) as above. An isotropic reductive group scheme
	\( G \)
	\textit{splits} if it is isomorphic to
	\( \mathrm{G}^\Lambda(\widetilde{\Phi}, {-}) \) with the standard group subschemes.
	
	For example, if
	\( A \) is an Azumaya algebra of rank
	\( d \geq 1 \) over
	\( K \), then
	\( \mathrm{SL}(r + 1, A) \) is the point group of an isotropic reductive group scheme over
	\( K \) with the absolute root system of type
	\( \mathsf{A}_{(r + 1) d - 1} \) and the relative root system of type
	\( \mathsf{A}_r \). This group consists of
	\( (r + 1) \times (r + 1) \) matrices with trivial reduced norm in the Azumaya algebra
	\( \mat(r + 1, A) \).
	
	This class of group schemes is interesting because every simple reductive group scheme
	\( G \) over a local ring
	\( K \) is either anisotropic or isotropic in the above sense in a canonical way up to conjugation, where
	\[
	L
	\rtimes
	\bigl\langle
	U_\alpha
	\mid
	\alpha \in \Phi^{+}
	\bigr\rangle
	\]
	is a minimal parabolic subgroup, see
	\cite[XXVI]{sga3} and
	\cite{index-loc} for details. Moreover, if
	\( G \) is a locally isotropic simple reductive group scheme over arbitrary ring
	\( K \) and
	\( \mathfrak{p} \trianglelefteq K \) is a prime ideal, then the isotropic structure of
	\( G_{\mathfrak{p}} \) admits a continuation to
	\( G_s \) for sufficiently large
	\( s \in K \setminus \mathfrak{p} \). Finally, many questions on generic reductive group schemes reduce to simple ones using direct product decomposition and Weil restriction
	\cite[XXIV, proposition 5.10]{sga3}, see lemma
	\ref{ssys-sgr} below for an example of such approach.
	
	In any isotropic reductive group scheme
	\( G \) both
	\( G \) and
	\( L \) are indeed reductive group schemes, i.e. they are affine, smooth, and their geometric fibers are (connected) reductive algebraic groups. We say that
	\( U_\alpha \) are the
	\textit{root subgroups} of
	\( G \). Clearly,
	\( U_{2 \alpha} \leq U_\alpha \) for
	\( \alpha, 2 \alpha \in \Phi \) (if the type of
	\( \Phi \) is
	\( \mathsf{BC}_\ell \)),
	\[
	[U_\alpha, U_\beta]
	\leq
	\bigl\langle
	U_{i \alpha + j \beta}
	\mid
	i \alpha + j \beta \in \Phi,\,
	i, j > 0
	\bigr\rangle
	\]
	for non-antiparallel
	\( \alpha \) and
	\( \beta \), for any subset of positive roots
	\( \Phi^{+} \) the multiplication map
	\[
	\prod_{\alpha \in \Phi^{\pm} \setminus 2 \Phi^{\pm}}
	U_\alpha
	\to
	U^{\pm}
	=
	\langle U_\alpha \mid \alpha \in \Phi^{\pm} \rangle
	\]
	is one-to-one,
	\( L \) normalizes all
	\( U_\alpha \), and the multiplication map
	\[
	U^{-} \times L \times U^{+} \to G
	\]
	is an open embedding. The group subsheaves
	\( L \rtimes U^{\pm} \) are opposite parabolic subgroups,
	\( U^{\pm} \) are their unipotent radicals, and
	\( L \) is their common Levi subgroup.
	
	If
	\( \pi \colon G \to G' \) is a central isogeny of twisted forms of Chevalley--Demazure group schemes, then
	\( G \) is isotropic if and only if
	\( G' \) is isotropic. More precisely, if
	\( G \) is isotropic, then the images of its distinguished group subschemes form an isotropic structure on
	\( G' \). Conversely, if
	\( G' \) is isotropic with distinguished group subschemes
	\( L \) and
	\( U_\alpha \), then
	\( G \) is isotropic with
	\( \pi^{- 1}(L) \) and the scheme commutators
	\( [ \pi^{- 1}(L), \pi^{- 1}(U_\alpha) ] \).
	
	Let
	\( \mathfrak{g} \) be the Lie
	\( K \)-algebra of
	\( G \). It has a weight decomposition
	\[
	\mathfrak{g}
	=
	\mathfrak{g}_0
	\oplus
	\bigoplus_{\alpha \in \Phi}
	\mathfrak{g}_\alpha,
	\]
	where
	\( \mathfrak{g}_0 \) is the Lie algebra of
	\( L \),
	\( \mathfrak{g}_\alpha \) is the Lie algebra of
	\( U_\alpha \) for
	\( 2 \alpha \notin \Phi \), and
	\( \mathfrak{g}_\alpha \oplus \mathfrak{g}_{2 \alpha} \) is the Lie algebra of
	\( U_\alpha \) for
	\( 2 \alpha \in \Phi \). If
	\( 2 \alpha \notin \Phi \), then there is a canonical isomorphism
	\(
	t_\alpha
	\colon
	\mathfrak{g}_\alpha \otimes_K ({-})
	\to
	U_\alpha
	\), so
	\( U_\alpha(K) \) has a structure of a
	\( K \)-module. Otherwise
	\( \Phi \) is of type
	\( \mathsf{BC}_\ell \) and there is a canonical isomorphism
	\cite[\S 2]{iso-elem}
	\[
	P_\alpha \boxtimes_K ({-}) \to U_\alpha,
	\]
	where
	\( P_\alpha \) is a so-called
	\textit{%
		\( 2 \)-step nilpotent
		\( K \)-module%
	}
	\cite[\S 4]{twisted-forms} with the distinguished short exact sequence
	\[
	\dot 0
	\to
	\mathfrak{g}_{2 \alpha}
	\to
	P_\alpha
	\to
	\mathfrak{g}_\alpha
	\to
	\dot 0,
	\]
	and
	\( \boxtimes_K \) denotes the scalar extension of these objects. In particular, there is a right action
	\( (x, k) \mapsto x \cdot k \) of the multiplicative monoid
	\( K^\bullet \) on the group
	\( P_\alpha \). It is easy to see that the above exact sequence splits in a non-canonical way, i.e.
	\(
	P_\alpha
	\cong
	\mathfrak{g}_{2 \alpha} \dotoplus \mathfrak{g}_\alpha
	\), where
	\[
	(x \dotoplus y) \dotplus (x' \dotoplus y')
	=
	(x + c(y, y') + x') \dotoplus (y + y'),
	\quad
	(x \dotoplus y) \cdot k = x k^2 \dotoplus y k
	\]
	for a suitable bilinear map
	\(
	c
	\colon
	\mathfrak{g}_\alpha \times \mathfrak{g}_\alpha
	\to
	\mathfrak{g}_{2 \alpha}
	\). Indeed, every
	\( 2 \)-step nilpotent module
	\( (M, M_0) \) with projective factor-module
	\( M / M_0 \) splits. This is clear if
	\( M / M_0 \) is free, and the general case follows by adding a direct summand complementing
	\( M / M_0 \) to a free module. It is convenient to denote
	\( \mathfrak{g}_\alpha \) by
	\( P_\alpha \) if
	\( 2 \alpha \notin \Phi \), so in all cases
	\( U_\alpha(K) \cong P_\alpha \).
	
	\begin{lemma}
		\label{aut}
		Let
		\( G \) be isotropic reductive group scheme over
		\( K \). Then its automorphism group scheme
		\( A = \mathbf{Aut}(G, L, U_\alpha) \) is an extension of a finite group scheme (a subgroup of the constant group sheaf
		\( \mathbf{Out}(\widetilde{\Phi}) \)) by
		\( L / \Cent(G) \).
	\end{lemma}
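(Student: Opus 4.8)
The plan is to view $A=\mathbf{Aut}(G,L,U_\alpha)$ as a subgroup scheme of the full automorphism group scheme $\mathbf{Aut}(G)$ and to read off the claimed structure from the known description of the latter. Recall from \cite[XXIV]{sga3} that $\mathbf{Aut}(G)$ fits into a canonical exact sequence $1\to G/\Cent(G)\to\mathbf{Aut}(G)\to\mathbf{Out}(G)\to 1$, where $G/\Cent(G)$ is the subgroup of inner automorphisms and $\mathbf{Out}(G)$ is a finite étale group scheme which, after an fppf cover splitting $G$, becomes constant with value a subgroup of $\mathbf{Out}(\widetilde\Phi)=\Aut(\widetilde\Phi)/W(\widetilde\Phi)$ (the lattice $\Lambda$ may cut it down). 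Since an automorphism respecting the isotropic structure is in particular an automorphism of $G$, the group scheme $A$ is naturally the subgroup scheme of $\mathbf{Aut}(G)$ stabilizing the configuration $(L,U_\alpha)$.

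The first step is to identify $A\cap(G/\Cent(G))$, the inner automorphisms respecting the isotropic structure. Since $L$ normalizes $L$ and each $U_\alpha$ (part of the axioms recalled above), conjugation defines a homomorphism $L\to A$, $l\mapsto\up l{({-})}$, with image in $G/\Cent(G)$. Conversely, if $g\in G$ and $\up g{({-})}$ preserves every $U_\alpha$, then it preserves $U^\pm$ and hence the opposite parabolic subgroup schemes $P^\pm$ with common Levi $L$; as parabolic subgroup schemes are their own normalizers and $P^+\cap P^-=L$ \cite[XXVI]{sga3}, we get $g\in L$. Thus $A\cap(G/\Cent(G))$ is exactly the image of $L$. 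The kernel of $L\to A$ is $\Cent(G)\cap L$, and this equals $\Cent(G)$: the center of a reductive group scheme lies in every maximal torus, so fppf-locally $\Cent(G)\le\widetilde T\le L$, and a containment of closed subschemes may be checked fppf-locally. Hence $A\cap(G/\Cent(G))\cong L/\Cent(G)$, a closed subgroup scheme of $A$, and it is normal in $A$ because $\phi\circ\up l{({-})}\circ\phi^{-1}=\up{\phi(l)}{({-})}$ with $\phi(l)\in\phi(L)=L$.

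Consequently $A$ sits in an exact sequence $1\to L/\Cent(G)\to A\to Q\to 1$ with $Q=A/(L/\Cent(G))$, and the composite $A\hookrightarrow\mathbf{Aut}(G)\twoheadrightarrow\mathbf{Out}(G)$ has kernel exactly $A\cap(G/\Cent(G))=L/\Cent(G)$, so it induces a monomorphism $Q\hookrightarrow\mathbf{Out}(G)$. A subsheaf of a finite étale group scheme closed under the group law is again finite étale (étale-locally a constant subgroup), so $Q$ is a finite group scheme; tracing through the splitting of $G$ and the globally defined combinatorial datum $(\widetilde\Phi,u)$ identifies it with a subgroup of the constant sheaf $\mathbf{Out}(\widetilde\Phi)$, namely the classes of automorphisms of $\widetilde\Phi$ commuting with $u$. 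In the split case this sequence is moreover a semidirect product, split by the $u$-compatible diagram automorphisms attached to a pinning, but this section can be obstructed after twisting, which is why only an extension is asserted.

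The step I expect to be most delicate is securing the identity $A\cap(G/\Cent(G))=L/\Cent(G)$ scheme-theoretically rather than just on points — this rests on the self-normalizing property and the intersection formula for the parabolic subgroup schemes $P^\pm$, and on $\Cent(G)\subseteq L$ as closed subschemes — together with the bookkeeping that identifies $Q$ with an honest (constant) subgroup of $\mathbf{Out}(\widetilde\Phi)$ rather than merely a twisted form of one. A robust way to handle all of this, and simultaneously to pin down which subgroup occurs, is to descend fppf-locally to the split model $\mathrm G^\Lambda(\widetilde\Phi,{-})$ with the standard subgroups attached to $u$: there any automorphism can be adjusted by an element of $\mathrm{Inn}_L$ so as to fix $\widetilde T$, whence it induces an automorphism of $(\Lambda,\widetilde\Phi)$ commuting with $u$, and one concludes by fppf descent of the resulting exact sequence. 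I would carry out this reduction as the technical core of the argument.
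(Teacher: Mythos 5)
Your route is essentially the paper's: reduce fppf\mbox{-}locally to the split model, use \( \Aut(G) \cong (G/\Cent(G))(K) \rtimes O \) with \( O \le \mathrm{Out}(\widetilde\Phi) \) the stabilizer of \( \Lambda \), adjust a given automorphism by elements of \( L/\Cent(G) \) until it preserves \( \widetilde T \) and a Borel subgroup, and read off a \( u \)-compatible outer automorphism of \( \widetilde\Phi \); the ``technical core'' you defer at the end is exactly the computation the paper actually carries out (including the point, which you gesture at, that the resulting subgroup of \( \mathbf{Out}(\widetilde\Phi) \) is independent of the choice of lifting and is determined by the Tits index, hence constant). So the quotient half of your argument is a sketch of the paper's proof rather than a different proof.

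There is, however, one inference in your identification of the inner part that is wrong as written: you claim that if an inner automorphism \( \up g{({-})} \) preserves every \( U_\alpha \), then it preserves \( U^\pm \) ``and hence'' \( P^\pm \), so \( g \in P^+ \cap P^- = L \). Preserving \( U^\pm \) does not imply preserving \( P^\pm \): the scheme normalizer of \( U^+ \) can be strictly larger than \( P^+ \). The paper's last section exhibits this for the adjoint group with Tits index \( \up 1{\mathsf A^{(1)}_{1,1}} \): the elements \( t_-(a) \) with \( a^{\otimes 2} = 2a = 0 \) normalize \( U^+ \) (and trivially \( U^- \)) but normalize neither \( L \) nor \( P^+ \) (already over \( K = \mathbb Z/4 \), testing against \( T \) after a base change). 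Since lemma \ref{aut} excludes no Tits index, your step fails exactly there; the statement you are implicitly invoking is theorem \ref{root-norm}, which carries the exceptional-case hypothesis for this reason. The repair is immediate and stays inside your setup: elements of \( A \) preserve \( L \) by definition, hence preserve the subgroup sheaf \( P^\pm = L \ltimes U^\pm \) generated by \( L \) and \( U^\pm \), and then self-normalization of parabolics together with \( P^+ \cap P^- = L \) gives \( A \cap (G/\Cent(G)) = L/\Cent(G) \) as you want. With that correction (and with the deferred split-case computation actually performed), your argument goes through and coincides with the paper's.
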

	\begin{proof}
		We can assume that
		\( G \) splits and the spectrum of
		\( K \) is connected (and, if necessary, that
		\( K \) is Noetherian). By definition,
		\[
		A(K)
		\leq
		\Aut(G)
		\cong
		(G / \Cent(G))(K) \rtimes O
		\]
		consists of automorphisms
		\( f \) with the properties
		\( f(L) = L \) and
		\( f(U_\alpha) = U_\alpha \) for all
		\( \alpha \in \Phi \), where
		\( O \leq \mathrm{Out}(\widetilde{\Phi}) \) is the stabilizer of the weight lattice
		\( \Lambda \). We can assume that
		\( O \) preserves the maximal torus
		\( \widetilde{T} \) and permutes the root subgroups
		\( \widetilde{U}_\alpha \). Moreover, if we fix subsets of positive roots
		\( \widetilde{\Phi}^{+} \subset \widetilde{\Phi} \) and
		\( \Phi^{+} \subset \Phi \) in such a way that
		\(
		u(\widetilde{\Phi}^{+} \cup \{ 0 \})
		=
		\Phi^{+} \cup \{ 0 \}
		\), then we can assume that
		\( O \) preserves the Borel subgroup
		\[
		\widetilde{T}
		\rtimes
		\bigl\langle
		\widetilde{U}_\alpha
		\mid
		\alpha \in \widetilde{\Phi}^{+}
		\bigr\rangle.
		\]
		These properties determine liftings of outer automorphisms to ordinary automorphisms up to a choice of structure constants.
		
		Clearly,
		\( L / \Cent(G) \trianglelefteq A \) is a normal group subscheme. If
		\( f \in A(K) \), then multiplying it fppf locally by elements of
		\( L / \Cent(G) \) we can assume that
		\( f(\widetilde{T}) = \widetilde{T} \) and
		\( f \) preserves the set
		\( \widetilde{\Phi}^{+} \cap u^{- 1}(0) \) of positive roots of the split reductive group scheme
		\( L \). Since
		\( f(U_\alpha) = U_\alpha \), it also preserves all sets
		\( u^{- 1}(\alpha) \subset \widetilde{\Phi} \) for
		\( \alpha \in \Phi \), so
		\( f \) preserves
		\( \Phi^{+} \) and lies in
		\( (\widetilde{T} / \Cent(G))(K) \rtimes O \). The first factor is contained in
		\( (L / \Cent(G))(K) \), so without loss of generality
		\( f \in O \). It remains to note that the intersection
		\( O \cap A(K) \) is independent of the choice of the lifting of
		\( O \) to a subgroup of
		\( \Aut(G) \) preserving
		\( \widetilde{T} \) and the Borel subgroup, i.e. it depends only on the Tits index.
	\end{proof}
	
	It turns out that every isotropic reductive group scheme
	\( G \) has a canonical split torus
	\( T \leq L \) of rank
	\( \mathrm{rk}(\Phi) \). We construct this torus in the simply connected case, and in remaining cases one can take the corresponding factor-group scheme.
	
	\begin{theorem}
		\label{split-torus}
		Let
		\( G \) be a simply connected isotropic reductive group scheme over
		\( K \) and
		\( H \) be the scheme centralizer of the automorphism group scheme
		\( A = \mathrm{Aut}(G, L, U_\alpha) \) in
		\( G \). Then
		\( H \) is a group scheme of muliplicative type, its largest subtorus
		\( T \leq H \) has rank
		\( \mathrm{rk}(\Phi) \), and
		\( \Phi \) canonically embeds into its character group. In particular,
		\( T \) splits. Each
		\( g \in T(K) \) acts on root subgroups by the formulae
		\(
		\up{g}{ t_\alpha(x) }
		=
		t_\alpha(x\, \alpha(g))
		\) for
		\( 2 \alpha \notin \Phi \) and
		\(
		\up{g}{ t_\alpha(x) }
		=
		t_\alpha(x \cdot \alpha(g))
		\) otherwise.
	\end{theorem}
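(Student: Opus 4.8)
The plan is to reduce to the split case and then compute $H$ explicitly as a fixed-point subscheme of the standard maximal torus $\widetilde T$. First I would record that the scheme-theoretic fixed points $H = \Cent_G(A) = G^A$ exist (the action of the smooth affine group scheme $A$ on the affine $G$ has representable fixed locus) and that their formation commutes with flat base change; likewise being of multiplicative type, containing a subtorus of rank $\rk(\Phi)$, splitting, and the two conjugation identities are all fppf-local on $K$. So I may assume that $G$ splits and $\Spec K$ is connected, and use Lemma~\ref{aut}: $A$ is generated by the image of $L$ and a lift of a finite constant group $O \leq \mathbf{Out}(\widetilde\Phi)$ (depending only on the Tits index), which I may take to preserve $\widetilde T$, the chosen positive system, and to permute the $\widetilde U_{\widetilde\alpha}$.

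Next I would identify $H$. Let $\widetilde\Phi_0 = u^{-1}(0) \cap \widetilde\Phi$, so $L$ is split reductive with maximal torus $\widetilde T$ and root system $\widetilde\Phi_0$. Since $\widetilde T$ is a maximal torus of $G$ and $\Cent_G(\widetilde T) = \widetilde T$, anything centralizing $L \supseteq \widetilde T$ lies in $\widetilde T$, whence $\Cent_G(L) = Z(L) = \bigcap_{\widetilde\alpha \in \widetilde\Phi_0}\Ker(\widetilde\alpha) \leq \widetilde T$, a diagonalizable group scheme with character group $M = \Lambda/\langle\widetilde\Phi_0\rangle$. As $O$ preserves $\widetilde\Phi_0$ it acts on $Z(L)$, independently of the chosen lift (two lifts differ by conjugation by an element of $L$, which is trivial on $\Cent_G(L)$); hence $H = \Cent_G(L) \cap \Cent_G(O) = Z(L)^O$. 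The fixed points of a finite constant group acting on a diagonalizable group scheme are again diagonalizable — $Z(L)^O = D(M_O)$ with $M_O$ the $O$-coinvariants of $M$ — so $H$ is of multiplicative type.

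Then the largest subtorus $T \leq H$ is $D(M_O/\mathrm{tors})$, so $\rk(T) = \dim_{\mathbb Q}\bigl((M \otimes \mathbb Q)^O\bigr)$ (over $\mathbb Q$ coinvariants equal invariants). Since $G$ is simply connected $\Lambda \otimes \mathbb Q = \mathbb Q\widetilde\Phi$, so $M \otimes \mathbb Q = \mathbb Q\widetilde\Phi / \mathbb Q\widetilde\Phi_0$ and $\dim (M\otimes\mathbb Q)^O = \rk(\Phi)$ by the very definition of the relative root system attached to a Tits index. For $\alpha \in \Phi$ every $\widetilde\alpha \in u^{-1}(\alpha)$ has the same image in $M_O$ — this is precisely the defining property of $u$ — and these images give a canonical embedding $\Phi \hookrightarrow M_O$, hence $\Phi \hookrightarrow X(T)$, onto a finite-index subset. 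Because $\Phi$ and this embedding are intrinsic to $(G, L, U_\bullet)$, these are canonical global sections of $X(T)$; the étale monodromy on $X(T)$ therefore fixes a finite-index subgroup pointwise and so is trivial, i.e. $X(T)$ is constant and $T$ splits.

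Finally, for $g \in T(K)$ the conjugation $c_g$ is an automorphism of $(G, L, U_\bullet)$ over $K$ (as $T \leq L$ normalizes every $U_\alpha$), so functoriality of the canonical isomorphism $t_\alpha$ gives $\up g{t_\alpha(x)} = t_\alpha(\Ad(g)\,x)$. When $2\alpha \notin \Phi$, $\mathfrak g_\alpha = \bigoplus_{u(\widetilde\alpha) = \alpha}\widetilde{\mathfrak g}_{\widetilde\alpha}$ and each summand has $\widetilde T$-weight $\widetilde\alpha$ restricting to $\alpha$ on $T$, so $\Ad(g)$ is multiplication by $\alpha(g)$ and $\up g{t_\alpha(x)} = t_\alpha(x\,\alpha(g))$. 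When $2\alpha \in \Phi$, $\Ad(g)$ respects the filtration $\mathfrak g_{2\alpha} \subset P_\alpha$, acts by $\alpha(g)^2$ on $\mathfrak g_{2\alpha}$ and by $\alpha(g)$ on $\mathfrak g_\alpha = P_\alpha/\mathfrak g_{2\alpha}$, hence equals the monoid action $x \mapsto x \cdot \alpha(g)$ on $P_\alpha$ (check on a splitting $P_\alpha \cong \mathfrak g_{2\alpha} \dotoplus \mathfrak g_\alpha$), giving $\up g{t_\alpha(x)} = t_\alpha(x \cdot \alpha(g))$. I expect the main obstacle to be the middle steps — rigorously identifying $\Cent_G(A)$ with $Z(L)^O$ together with the representability and descent bookkeeping, and invoking the correct combinatorial fact about Tits indices to pin the rank down to $\rk(\Phi)$; once $H = Z(L)^O$ is in hand, everything else is formal.
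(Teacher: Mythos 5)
Your proposal is correct and follows essentially the same route as the paper: reduce to the split case with connected spectrum, use Lemma~\ref{aut} to identify \( H \) with the fixed points of the lifted finite outer group acting on \( \Cent_G(L) \leq \widetilde T \), compute the maximal subtorus via the Tits-index combinatorics on the character lattice, and read off the action on root subgroups from \( T \leq \widetilde T \). Your phrasing via \( O \)-coinvariants of \( \Lambda / \mathbb Z \widetilde\Phi_0 \) is just a lattice-theoretic restatement of the paper's description through \( (\Gamma, J) \), and your glosses (that invariants under the possibly larger stabilizer \( O_{L, U_\alpha} \supseteq \Gamma \) still have rank \( \rk(\Phi) \), and that conjugation agrees with the \( K^\bullet \)-action on \( P_\alpha \) in the \( \mathsf{BC} \) case) are at the same level of detail as the paper's own ``clearly''.
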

	\begin{proof}
		We again can assume that
		\( G \) splits and the spectrum of
		\( K \) is connected. Let
		\( O = \mathrm{Out}(\widetilde{\Phi}) \) be the outer automorphism group of
		\( G \) lifted to a subgroup of
		\( A(K) \) preserving a maximal torus and the Borel subgroup corresponding to a choice of subsets of positive roots
		\( \widetilde{\Phi}^{+} \subseteq \widetilde{\Phi} \),
		\( \Phi^{+} \subseteq \Phi \) as in the proof of lemma
		\ref{aut}. Let also
		\( O_{L, U_\alpha} \leq O \) be the stabilizer of the isotropic structure, i.e. if we identify
		\( O \) with the group of automorphisms of
		\( \Phi \) preserving
		\( \widetilde{\Phi} \), then
		\( O_{L, U_\alpha} \) consists of automorphisms
		\( f \) with the property
		\( u \circ f = u \). Clearly,
		\( H \leq \Cent(L) \leq \widetilde{T} \) is the centralizer of
		\( O_{L, U_\alpha} \) (the maximal torus
		\( \widetilde{T} \) is its own centralizer in
		\( G \) by
		\cite[XIX, 2.8]{sga3}), so it is a group scheme of multiplicative type.
		
		Recall that any irreducible Tits index consists of a subgroup
		\( \Gamma \leq O \) (possibly a proper subgroup of
		\( O_{L, U_\alpha} \)) and a
		\( \Gamma \)-invariant set
		\( J \) of vertices of the Dynkin diagram of
		\( \widetilde{\Phi} \). The map
		\( u \) is induced by a linear map between the corresponding Euclidean spaces. This linear map is obtained by taking the factor-space by all basic roots not in
		\( J \) and by identifying roots from
		\( J \) differing by the action of
		\( \Gamma \). Clearly, the corresponding subtorus of
		\( \widetilde{T} \) is precisely
		\( T \), so
		\( \Phi \) is a subset of its character group, the inclusion
		\( T \leq \widetilde{T} \) induces the map
		\(
		u
		\colon
		\widetilde{\Phi} \sqcup \{ 0 \}
		\to
		\Phi \sqcup \{ 0 \}
		\), and the action of
		\( T \) on root subgroups is given by the formulae from the statement.
	\end{proof}
	
	It is easy to see that for any choice of the weight lattice the group subscheme
	\( L \) is the scheme centralizer of
	\( T \). Indeed, such centralizer is a closed reductive group subscheme of type (RC) by
	\cite[XIX, 2.8 and XXII, \S 5]{sga3} containing
	\( L \), but it has trivial intersection with all root subgroups by theorem
	\ref{split-torus}. Theorem
	\ref{split-torus} also means that
	\( (T, \Phi) \) is an isotropic pinning in the sense of
	\cite{iso-elem} locally in Zariski topology (because the definition formally requires that all weight subspaces are free modules). In particular, by
	\cite[\S 2]{iso-elem} all root subgroups can be reconstructed by
	\( (T, \Phi) \) as follows. If
	\( \alpha \in \Phi \setminus 2 \Phi \), then
	\[
	U_\alpha = \bigcap_\lambda U_G(\lambda),
	\]
	where
	\( \lambda \) runs over all constant coweights of
	\( T \) such that
	\( \langle \alpha, \lambda \rangle > 0 \) and
	\[
	U_G(\lambda)(K')
	=
	\bigl\{
	g \in G(K')
	\mid
	\lim_{t \to 0} \up{ \lambda(t) }{g} = 1
	\bigr\}.
	\]
	If
	\( \alpha \in \Phi \cap 2 \Phi \), then
	\( U_\alpha \leq U_{\alpha / 2} \) is the locus where the action of the multiplicative monoid scheme
	\( \mathbb{A}^1 \) on
	\( U_{\alpha / 2} \) factors through the squaring morphism
	\( \mathbb{A}^1 \to \mathbb{A}^1 \).

	\section{%
		Reductive groups graded by
		\( \mathsf{A}_1 \)%
	}
	\label{a1-grading}
	
	We need an explicit construction of split simply connected isotropic reductive groups with
	\( \Phi \) of type
	\( \mathsf{A}_1 \). In such a group scheme there are only two root subgroups
	\( U^{\pm} \),
	\( P^{\pm} = U^{\pm} \rtimes L \) are opposite parabolic subgroups,
	\( L \) is their common Levi subgroup, and
	\( U^{\pm} \) are their unipotent radicals. The cojugation by an element
	\( w \in G(K) \) swaps
	\( P^{+} \) with
	\( P^{-} \) if and only if it swaps
	\( U^{+} \) with
	\( U^{-} \) and preserves
	\( L \). If such element exists, then the set of all such elements is an
	\( L(K) \)-coset because
	\( P^{\pm} \) are their own normalizers
	\cite[XXII, corollary 5.8.5]{sga3}.
	
	Our goal is to describe elements
	\( w \in U^{+}(K)\, U^{-}(K)\, U^{+}(K) \) conjugating
	\( P^{+} \) with
	\( P^{-} \) and vice versa. To obtain the equations on parameters of such
	\( w \) (with respect to fixed parameterizations of the root subgroups by
	\( K \)-modules) it is useful to find at least one such
	\( w_0 \), then get equations on
	\( w_0 L \), and finally impose the condition that generic
	\( w \in U^{+}(K)\, U^{-}(K)\, U^{+}(K) \) lies in this coset. This approach leads to significantly simpler calculations than the naive checking of
	\( \up{w}{P^{\pm}} = P^{\mp} \). In all cases the root subgroups
	\( U^{\pm} \) are parameterized by a certain
	\textit{Jordan pair} (or
	\textit{quadratic Jordan algebra} if we fix an element
	\( w_0 \) used above), and the Jordan operations explicitly appear in the formulae
	\( \up{w}{ t_{\pm}(p) } = t_{\mp}(f(p)) \). See
	\cite[\S 6.6]{loos-neher} for details and an even more general context.
	
	The only irreducible Tits indices with
	\( \Phi \) of type
	\( \mathsf{A}_1 \) are listed in table
	\ref{a1-list} together with the exception
	\( \TitsIndex{E}{}{7}{1}{78} \). The only coincidences between them are
	\begin{align*}
		\TitsIndex{A}{1}{1}{1}{(1)}
		&\cong
		\TitsIndex{B}{}{1}{1}{}
		\cong
		\TitsIndex{C}{}{1}{1}{(1)},
		&
		\TitsIndex{D}{1}{4}{1}{(1)}
		&\cong
		\TitsIndex{D}{1}{4}{1}{(4)}
		\sim
		\TitsIndex{D}{2}{4}{1}{(1)},
		\\
		\TitsIndex{B}{}{2}{1}{}
		&\cong
		\TitsIndex{C}{}{2}{1}{(2)},
		&
		\TitsIndex{A}{1}{2 d - 1}{1}{(d)}
		&\sim
		\TitsIndex{A}{2}{2 d - 1}{1}{(d)}
		\text{ for }
		d \geq 2,
		\\
		\TitsIndex{A}{1}{3}{1}{(2)}
		&\cong
		\TitsIndex{D}{1}{3}{1}{(1)}
		\sim
		\TitsIndex{A}{2}{3}{1}{(2)}
		\cong
		\TitsIndex{D}{2}{3}{1}{(1)}
		&
		\TitsIndex{D}{1}{n}{1}{(1)}
		&\sim
		\TitsIndex{D}{2}{n}{1}{(1)}
		\text{ for }
		n \geq 3.
	\end{align*}
	Here
	\( \cong \) means that two Tits indices differ by an exceptional isomorphism of Dynkin diagrams and, possibly, by an outer automorphism. The symbol
	\( \sim \) means that two Tits indices induce isomorphic maps
	\( u \). See
	\cite[\S 2]{diophantine} for the complete list of all such coincidences between irreducible Tits indices.
	
	\begin{table}[h!]
		\centering
		\begin{tabular}{|c|c|c|c|}
			\hline
			Tits index
			&
			restrictions
			&
			\( \widetilde{\Phi} \)
			&
			\( \deg(\e_i) \)
			\\ \hline \hline
			\( \TitsIndex{A}{1}{2 d - 1}{1}{(d)} \)
			&
			\( d \geq 1 \)
			&
			\(
			\{
			\e_i - \e_j
			\mid
			0 \leq i, j \leq 2 d - 1;\,
			i \neq j
			\}
			\)
			&
			\( - 1 / 2 \),
			\( i < d \)
			\\
			&&&
			\( 1 / 2 \),
			\( i \geq d \)
			\\ \hline
			\( \TitsIndex{A}{2}{2 d - 1}{1}{(d)} \)
			&
			\( d \geq 2 \)
			&
			\(
			\{
			\e_i - \e_j
			\mid
			0 \leq i, j \leq 2 d - 1;\,
			i \neq j
			\}
			\)
			&
			\( - 1 / 2 \),
			\( i < d \)
			\\
			&&&
			\( 1 / 2 \),
			\( i \geq d \)
			\\ \hline
			\( \TitsIndex{B}{}{n}{1}{} \)
			&
			\( n \geq 1 \)
			&
			\(
			\{
			\pm \e_i \pm \e_j
			\mid
			1 \leq i < j \leq n
			\}
			\)
			&
			\( 0 \),
			\( i < n \)
			\\
			&&
			\(
			{}
			\sqcup
			\{ \pm \e_i \mid 1 \leq i \leq n \}
			\)
			&
			\( 1 \),
			\( i = n \)
			\\ \hline
			\( \TitsIndex{C}{}{d}{1}{(d)} \)
			&
			\( d = 2^k \geq 1 \)
			&
			\(
			\{
			\pm \e_i \pm \e_j
			\mid
			1 \leq i < j \leq d
			\}
			\)
			&
			\( 1 / 2 \)
			\\
			&&
			\(
			{}
			\sqcup
			\{ \pm 2 \e_i \mid 1 \leq i \leq d \}
			\)
			&
			\\ \hline
			\( \TitsIndex{D}{1}{d}{1}{(d)} \)
			&
			\( d = 2^k \geq 4 \)
			&
			\(
			\{
			\pm \e_i \pm \e_j
			\mid
			1 \leq i < j \leq d
			\}
			\)
			&
			\( 1 / 2 \)
			\\ \hline
			\( \TitsIndex{D}{1}{n}{1}{(1)} \),
			\( \TitsIndex{D}{2}{n}{1}{(1)} \)
			&
			\( n \geq 3 \)
			&
			\(
			\{
			\pm \e_i \pm \e_j
			\mid
			1 \leq i < j \leq n
			\}
			\)
			&
			\( 0 \),
			\( i < n \)
			\\
			&&&
			\( 1 \),
			\( i = n \)
			\\ \hline
		\end{tabular}
		\caption{
			Classical Tits indices with
			\( \Phi = \mathsf{A}_1 \) given by the grading function
			\(
			\deg
			\colon
			\widetilde{\Phi}
			\to
			\{ - 1, 0, 1 \}
			\).
		}
		\label{a1-list}
	\end{table}
	
	The split group scheme of type
	\( \TitsIndex{A}{1}{2 d - 1}{1}{(d)} \) (and
	\( \TitsIndex{A}{2}{2 d - 1}{1}{(d)} \) for
	\( d \geq 2 \)) is the special linear group scheme
	\( G = \mathbb{SL}_{2 d} \). Let
	\( M_{-} = M_{+} = K^d \), so
	\[
	G(K) = \mathrm{SL}( M_{-} \oplus M_{+} ).
	\]
	Its distinguished subgroups are
	\begin{align*}
		P^{\pm}(K) &= \{ g \mid g(M_{\mp}) = M_{\mp} \},
		\\
		U^{\pm}(K)
		&=
		\{
		g
		\mid
		g|_{M_{\mp}} = 1,\,
		(g - 1)(M_{\pm}) \leq M_{\mp}
		\},
		\\
		L(K)
		&=
		\{ g \mid g(M_{-}) = M_{-},\, g(M_{+}) = M_{+} \},
		\\
		T(K)
		&=
		\{
		g
		\mid
		g|_{M_{\pm}}
		=
		\lambda^{\mp 1}
		\text{ for some }
		\lambda \in K^*
		\}.
	\end{align*}
	Root elements are
	\( t_{\pm}(p) = 1 + p \) for
	\(
	p
	\in
	\mathrm{Hom}( M_{\mp}, M_{\pm} )
	\cong
	\mat(d, K)
	\). An easy calculation shows that the conjugation by
	\( w = t_{+}(x)\, t_{-}(y)\, t_{+}(z) \) swaps
	\( P^{+} \) with
	\( P^{-} \) if and only if
	\( x = z \) is invertible and
	\( y = - x^{- 1} \). In this case
	\( w(m) = x m \) for
	\( m \in M_{+} \),
	\( w(m) = - x^{- 1}(m) \) for
	\( m \in M_{-} \),
	\[
	w L
	=
	\{ g \mid g(M_{+}) = M_{-},\, g(M_{-}) = M_{+} \},
	\]
	and
	\( w \) acts on root elements by
	\( \up{w}{ t_{+}(p) } = t_{-}( - x^{- 1} p x^{- 1} ) \),
	\( \up{w}{ t_{-}(p) } = t_{+}(- x p x) \).
	
	The split group scheme of type
	\( \TitsIndex{C}{}{d}{1}{(d)} \) is the group subscheme
	\( \mathbb{S} \mathrm{p}_{2 d} \leq \mathbb{SL}_{2 d} \), namely, the stabilizer of the symplectic form
	\[
	B\bigl( (x_{-}, x_{+}), (y_{-}, y_{+}) \bigr)
	=
	x_{-} \cdot y_{+} - y_{-} \cdot x_{+},
	\]
	where
	\( x_{\pm}, y_{\pm} \in M_{\pm} \) and
	\( x \cdot y = \sum_{i = 1}^d x_i y_i \) is the canonical pairing
	\( M_{-} \times M_{+} \to K \). The group subschemes
	\( P^{\pm} \),
	\( U^{\pm} \),
	\( L \) are the intersections of
	\( G \) with the corresponding group subschemes of
	\( \mathbb{SL}_{2 d} \), and the standard torus is the same. The root subgroups are parameterized by the submodule
	\( \{ p \mid p^{\mathrm{t}} = p \} \leq \mat(d, K) \)
	under the canonical isomorphisms
	\( \mathrm{Hom}( M_{\pm}, M_{\mp} ) \cong \mat(d, K) \). Again the conjugation by
	\( w = t_{+}(x)\, t_{-}(y)\, t_{+}(z) \) swaps
	\( P^{+} \) with
	\( P^{-} \) if and only if
	\( x = z \) is invertible and
	\( y = - x^{- 1} \), so the formulae for
	\( w \),
	\( w L \), and
	\( \up{w}{t_{\pm}(p)} \) are the same as for
	\( \mathbb{SL}_{2 d} \).
	
	In the cases
	\( \TitsIndex{B}{}{n}{1}{} \) and
	\(
	\TitsIndex{D}{1}{n}{1}{(1)}
	\sim
	\TitsIndex{D}{2}{n}{1}{(2)}
	\) let
	\( M_0 \) be the
	\( K \)-module with the split quadratic form
	\( q_0 \) of rank
	\( 2 n - 1 \) or
	\( 2 n - 2 \) respectively, i.e.
	\( q_0(x_1, x_2, \ldots) = x_1 x_2 + x_3 x_4 + \ldots \), and the last summand is
	\( x_{n - 1}^2 \) in the odd rank case. We embed
	\( M_0 \) to
	\( M = K e_{-} \oplus M_0 \oplus K e_{+} \) with the split quadratic form
	\( q(x_{-}, x_0, x_{+}) = x_{-} x_{+} + q_0(x_0) \). Let
	\( \mathrm{C}(M, q) \) be the
	\textit{Clifford algebra} of
	\( (M, q) \), i.e. the unital associative
	\( K \)-algebra generated by the
	\( K \)-module
	\( M \) with the relations
	\( m^2 = q(m) \) for
	\( m \in M \). This algebra is
	\( (\mathbb{Z} / 2 \mathbb{Z}) \)-graded, where the generating module
	\( M \) lies in the odd part. The group
	\( G(K) = \mathrm{Spin}(M, q) \) consists of invertible even elements
	\( g \in \mathrm{C}(M, q) \) such that
	\( \up{g}{M} = M \) and
	\( g^{- 1} = \alpha(g) \), where
	\( \alpha \) is the canonical involution on
	\( \mathrm{C}(M, q) \) acting trivially on
	\( M \). Its distinguished subgroups are
	\begin{align*}
		P^{\pm}(K)
		&=
		\{ g \mid \up{g}{e_{\mp}} \in K e_{\mp} \},
		\\
		U^{\pm}(K)
		&=
		\{
		g
		\mid
		g \in 1 + e_{\mp} \mathrm{C}(M_0, q_0)
		\},
		\\
		L(K)
		&=
		\{
		g
		\mid
		\up{g}{e_{-}} \in K e_{-},\,
		\up{g}{e_{+}} \in K e_{+}
		\},
		\\
		T(K)
		&=
		\{
		\lambda e_{-} e_{+} + \lambda^{- 1} e_{+} e_{-}
		\mid
		\lambda \in K^*
		\}.
	\end{align*}
	The root elements are
	\( t_{\pm}(p) = 1 + p e_{\mp} \) for
	\( p \in M_0 \). The conjugation by
	\( w = t_{+}(x)\, t_{-}(y)\, t_{+}(z) \) swaps
	\( P^{+} \) with
	\( P^{-} \) if and only if
	\( x = z \),
	\( q_0(x) \in K^* \), and
	\( y = q_0(x)^{- 1} x \), in this case
	\begin{align*}
		w &= x e_{-} + q_0(x)^{- 1} x e_{+},
		\\
		w L
		&=
		\{
		g
		\mid
		\up{g}{e_{-}} \in K e_{+},\,
		\up{g}{e_{+}} \in K e_{-}
		\},
		\\
		\up{w}{t_{+}(a)}
		&=
		t_{-}\bigl(
		q_0(x)^{- 2} B_0(x, a) x - q_0(x)^{- 1} a
		\bigr),
		\\
		\up{w}{t_{-}(a)}
		&=
		t_{+}\bigl( B_0(x, a) x - q_0(x) a \bigr),
	\end{align*}
	where
	\( B_0(x, y) = q_0(x + y) - q_0(x) - q_0(y) \) is the associated symmetric bilinear form.
	
	The split group scheme of type
	\( \TitsIndex{D}{1}{d}{1}{(d)} \) is again
	\( G = \mathbb{S} \mathrm{pin}_{2 d} \). The group subschemes
	\( P^{\pm} \),
	\( U^{\pm} \),
	\( L \) are the preimages in
	\( G \) of the corresponding group subschemes of
	\( \mathbb{SL}_{2 d} \), and the standard split torus is
	\[
	T(K)
	=
	\bigl\{
	\lambda^{d / 2}
	\prod_{i = 1}^d
	(\lambda^{- 1} e_i e_{- i} + e_{- i} e_i)
	\mid
	\lambda \in K^*
	\bigr\}.
	\]
	The root subgroups
	\( U^{\pm}(K) \) are parameterized by
	\(
	\{
	p \in \mat(d, K)
	\mid
	p^{\mathrm{t}} = - p,\,
	\forall i \enskip p_{i i} = 0
	\}
	\) as
	\begin{align*}
		t_{+}(p)
		&=
		\prod_{1 \leq i < j \leq d}
		(1 + p_{i j} e_{- i} e_{- j}),
		&
		t_{-}(p)
		&=
		\prod_{1 \leq i < j \leq d}
		(1 + p_{i j} e_i e_j),
	\end{align*}
	where
	\(
	M_{\pm}
	=
	\bigoplus_{i = 1}^d
	K e_{\pm i}
	\). The conjugation by
	\( w = t_{+}(x)\, t_{-}(y)\, t_{+}(z) \) swaps
	\( P^{+} \) with
	\( P^{-} \) if and only if
	\( x = z \) is invertible and
	\( y = - x^{- 1} \), this follows by considering the images in
	\( \mathrm{SO}(M, q) \leq \mathrm{SL}_{2 d}(K) \). Of course, the formulae
	\( \up{w}{ t_{\pm}(p) } \) are the same as in the case of
	\( \mathbb{SL}_{2 d} \).
	
	Finally, there is the exceptional case
	\( \TitsIndex{E}{}{7}{1}{78} \). We need several definitions and constructions from nonassociative algebra. Recall that
	\( C \) is a
	\textit{composition algebra}
	\cite[\S 19]{albert-alg} over
	\( K \) if it is a finitely generated projective
	\( K \)-module with bilinear multiplication, there is the identity element
	\( 1 \in C \) with respect to the multiplication, and there is a regular (or semiregular in the odd rank case) quadratic form
	\( q \colon C \to K \) satisfying
	\( q(1) = 1 \) and the composition law
	\( q(x y) = q(x)\, q(y) \). Let
	\( \langle x, y \rangle = q(x + y) - q(x) - q(y) \) be the associated symmetric bilinear form and
	\( \inv x = \langle x, 1 \rangle\, 1 - x \). Then
	\begin{align*}
		\inv{\inv x} &= x,
		&
		\langle x y, z \rangle
		&=
		\langle y, \inv{x} z \rangle
		=
		\langle x, z \inv{y} \rangle,
		\\
		q(\inv x) &= q(x),
		&
		x^2 y &= x (x y),
		\\
		\inv{\,1\,} &= 1,
		&
		x y^2 &= (x y) y,
		\\
		\inv{x} \inv{y} &= \inv{(y x)},
		&
		x (\inv{x} y) &= q(x)\, y,
		\\
		x\, \inv{x} &= \inv{x}\, x = q(x)\, 1,
		&
		(x y) \inv{y} &= q(y)\, x.
	\end{align*}
	
	\'Etale locally every composition algebra is either
	\( K \) itself with
	\( q(x) = x^2 \) and
	\( \inv x = x \), or
	\( K \times K \) with
	\( q(x, y) = x y \) and
	\( \inv{(x, y)} = (y, x) \), or
	\( \mat(2, K) \) with
	\( q = \det \) and
	\( \inv{ \sMat{x}{y}{z}{w} } = \sMat{w}{- y}{- z}{x} \), or the
	\textit{Zorn algebra}
	\( \mathrm{Z}(K) = \sMat{K}{K^3}{K^3}{K} \) with the multiplication
	\[
	\sMat{\alpha}{x}{y}{\beta}
	\sMat{\alpha'}{x'}{y'}{\beta'}
	=
	\sMat
	{ \alpha \alpha' + x \cdot y' }
	{ \alpha x' + x \beta' - y \times y' }
	{ y \alpha' + \beta y' + x \times x' }
	{ \beta \beta' + y \cdot x' },
	\]
	the identity
	\( \sMat{1}{0}{0}{1} \), the quadratic form
	\(
	q \sMat{\alpha}{x}{y}{\beta}
	=
	\alpha \beta - x \cdot y
	\), and the involution
	\(
	\inv{ \sMat{\alpha}{x}{y}{\beta} }
	=
	\sMat{\beta}{- x}{- y}{\alpha}
	\), where
	\( x \cdot y \) is the dot product and
	\( x \times y \) is the cross product of three dimensional vector-columns over
	\( K \). Fix the Zorn algebra
	\( C = \mathrm{Z}(K) \).
	
	A
	\textit{cubic norm structure} is a
	\( K \)-module
	\( A \) with a distinguished element
	\( 1 \in A \), a cubic form
	\( \Norm \colon A \to K \), a quadratic map
	\( ({-})^\# \colon A \to A \) with the polarization
	\( x \times y = (x + y)^\# - x^\# - y^\# \), and a symmetric bilinear form
	\( \Trace( {-}, {=} ) \colon A \times A \to K \) such that
	\begin{align*}
		1^\# &= 1,
		&
		1 \times x &= \Trace(1, x)\, 1 - x,
		\\
		\Norm(1) &= 1,
		&
		\Norm(x + y)
		&=
		\Norm(x)
		+ \Trace(x^\#, y)
		+ \Trace(x, y^\#)
		+ \Norm(y),
		\\
		x^{\# \#} &= \Norm(x)\, x,
		&
		\Norm(x^\#) &= \Norm(x)^2.
	\end{align*}
	The last axiom is omitted in the standard sources
	\cite[\S 33]{albert-alg} because
	\( A \) is usually assumed to be a finitely generated projective module and in this case the axiom follows from the previous ones. Every cubic norm stricture satisfies additional identities
	\begin{align*}
		(x, y, z)
		&=
		\Trace(x \times y, z)
		=
		\Trace(x, y \times z)
		\text{ is symmetric},
		\\
		\Trace(1, x \times y)
		&=
		\Trace(1, x) \Trace(1, y) - \Trace(x, y),
		\\
		x^\# \times (x \times y)
		&=
		\Trace(x^\#, y)\, x + \Norm(x)\, y,
		\\
		x \times (x^\# \times y)
		&=
		\Trace(x, y)\, x^\# + \Norm(x)\, y,
		\\
		(x \times y)^\# + x^\# \times y^\#
		&=
		\Trace(x, y^\#)\, x + \Trace(x^\#, y)\, y,
		\\
		\Trace(x^\#, x) &= 3 \Norm(x),
		\\
		\Norm(x \times y)
		&=
		\Trace(x^\#, y) \Trace(x, y^\#)
		-
		\Norm(x) \Norm(y),
		\\
		x^\# \times x
		&=
		\Trace(1, x) \Trace(1, x^\#)\, 1
		- \Norm(x)\, 1
		- \Trace(1, x^\#)\, x
		- \Trace(1, x)\, x^\#,
	\end{align*}
	see
	\cite[table of identities 33a]{albert-alg} for details. Every cubic norm structure is also a
	\textit{quadratic Jordan algebra}
	\cite[theorem 33.9]{albert-alg}, namely,
	\[
	\Bimul_x(y)
	=
	\Trace(x, y)\, x - x^\# \times y.
	\]
	An element
	\( x \in A \) is invertible in the sense of quadratic Jordan algebras if and only if
	\( \Norm(x) \in K^* \), in this case
	\( x^{- 1} = \Norm(x)^{- 1}\, x^\# \)
	\cite[corollary 33.10]{albert-alg}.
	
	We need only one cubic norm structure, the split
	\textit{Albert algebra}
	\cite[theorem 36.5, \S 39.18, \S 39.20]{albert-alg}
	\[
	A
	=
	\{
	a \in \mat(3, C)
	\mid
	a_{i i} \in K,\,
	a_{i j} = \inv{ a_{j i} }
	\}.
	\]
	The operations are given by
	\begin{align*}
		1
		&=
		\bigl(
		\begin{smallmatrix}
			1 & 0 & 0
			\\
			0 & 1 & 0
			\\
			0 & 0 & 1
		\end{smallmatrix}
		\bigr),
		\\
		\Bigl(
		\begin{smallmatrix}
			\alpha_1 & c_3 & \inv{c_2}
			\\
			\inv{c_3} & \alpha_2 & c_1
			\\
			c_2 & \inv{c_1} & \alpha_3
		\end{smallmatrix}
		\Bigr)^\#
		&=
		\Bigl(
		\begin{smallmatrix}
			\alpha_2 \alpha_3 - q(c_1)
			&
			\inv{(c_1 c_2)} - \alpha_3 c_3
			&
			c_3 c_1 - \alpha_2 \inv{c_2}
			\\
			c_1 c_2 - \alpha_3 \inv{c_3}
			&
			\alpha_1 \alpha_3 - q(c_2)
			&
			\inv{(c_2 c_3)} - \alpha_1 c_1
			\\
			\inv{(c_3 c_1)} - \alpha_2 c_2
			&
			c_2 c_3 - \alpha_1 \inv{c_1}
			&
			\alpha_1 \alpha_2 - q(c_3)
		\end{smallmatrix}
		\Bigr),
		\\
		\Norm\Bigl(
		\begin{smallmatrix}
			\alpha_1 & c_3 & \inv{c_2}
			\\
			\inv{c_3} & \alpha_2 & c_1
			\\
			c_2 & \inv{c_1} & \alpha_3
		\end{smallmatrix}
		\Bigr)
		&=
		\alpha_1 \alpha_2 \alpha_3
		+ \langle 1, c_1 c_2 c_3 \rangle
		- \alpha_1\, q(c_1)
		- \alpha_2\, q(c_2)
		- \alpha_3\, q(c_3),
		\\
		\Trace\Bigl(
		\Bigl(
		\begin{smallmatrix}
			\alpha_1 & c_3 & \inv{c_2}
			\\
			\inv{c_3} & \alpha_2 & c_1
			\\
			c_2 & \inv{c_1} & \alpha_3
		\end{smallmatrix}
		\Bigr),
		\Bigl(
		\begin{smallmatrix}
			\beta_1 & d_3 & \inv{d_2}
			\\
			\inv{d_3} & \beta_2 & d_1
			\\
			d_2 & \inv{d_1} & \beta_3
		\end{smallmatrix}
		\Bigr)
		\Bigr)
		&=
		\alpha_1 \beta_1
		+ \alpha_2 \beta_2
		+ \alpha_3 \beta_3
		+ \langle c_1, d_1 \rangle
		+ \langle c_2, d_2 \rangle
		+ \langle c_3, d_3 \rangle.
	\end{align*}
	
	Following
	\cite{muh-wei-e7} let
	\( W = K \times A \times K \times A \times K \). This set is a
	\( 2 \)-step nilpotent
	\( K \)-module with the nilpotent filtration
	\( 0 \leq 0 \times 0 \times K \times 0 \times 0 \leq W \) and the operations
	\begin{align*}
		(r, b, s, c, t) \dotplus (r', b', s', c', t')
		&=
		(
		r + r',
		b + b',
		s + s' - t r' - \Trace(c, b'),
		c + c',
		t + t'
		),
		\\
		(r, b, s, c, t) \cdot k
		&=
		(r k, b k, s k^2, c k, t k),
		\\
		\tau(r, b, s, c, t)
		&=
		(0, 0, 2 s + r t + \Trace(b, c), 0, 0).
	\end{align*}
	Let
	\[
	\Theta(r, b, s, c, t)
	=
	s^2
	+
	s \Trace(b, c)
	+
	r s t
	+
	\Trace(b^\#, c^\#)
	-
	r \Norm(c)
	+
	t \Norm(b),
	\]
	this is a form of degree
	\( 4 \) with respect to the action of the multiplicative monoid
	\( K^\bullet \), i.e.
	\( \Theta(w \cdot k) = \Theta(w)\, k^4 \). We define
	\( G(K) \) to be the group of all automorphisms of
	\( W \) as a
	\( 2 \)-step nilpotent
	\( K \)-module preserving
	\( \Theta \) and acting trivially on
	\( 0 \times 0 \times K \times 0 \times 0 \). Let
	\begin{align*}
		P^{+}(K)
		&=
		\bigl\{
		g
		\mid
		g( K \times 0 \times 0 \times 0 \times 0 )
		=
		K \times 0 \times 0 \times 0 \times 0
		\bigr\},
		\\
		P^{-}(K)
		&=
		\bigl\{
		g
		\mid
		g( 0 \times 0 \times 0 \times 0 \times K )
		=
		0 \times 0 \times 0 \times 0 \times K
		\bigr\},
		\\
		L(K)
		&=
		\bigl\{
		g
		\mid
		g(r, b, s, c, t)
		=
		(g_1(r), g_2(b), s, g_4(c), g_5(t))
		\text{ for some maps } g_i
		\bigr\},
		\\
		U^{+}(K)
		&=
		\bigl\{
		g
		\mid
		g(1, 0, 0, 0, 0) = (1, 0, 0, 0, 0),\,
		g(0, b, 0, 0, 0)
		\in
		K \times \{ b \} \times 0 \times 0 \times 0
		\bigr\},
		\\
		U^{-}(K)
		&=
		\bigl\{
		g
		\mid
		g(0, 0, 0, 0, 1) = (0, 0, 0, 0, 1),\,
		g(0, 0, 0, c, 0)
		\in
		0 \times 0 \times 0 \times \{ c \} \times K
		\bigr\}
	\end{align*}
	be subgroups of
	\( G(K) \) and
	\begin{align*}
		t_{+}(a)
		&\colon
		(r, b, s, c, t)
		\mapsto
		\bigl(
		r
		+
		\Trace(a, b)
		+
		\Trace(a^\#, c)
		-
		\Norm(a)\, t
		,
		b + a \times c - t a^\#
		, \\
		&\qquad\qquad\qquad\qquad
		s
		-
		\Trace(a, c^\#)
		+
		\Trace(a^\#, c)\, t
		-
		\Norm(a)\, t^2
		,
		c - t a
		,
		t
		\bigr),
		\\
		t_{-}(a)
		&\colon
		(r, b, s, c, t)
		\mapsto
		\bigl(
		r
		,
		b + r a
		,
		s
		-
		\Trace(a, b^\#)
		-
		\Trace(a^\#, b)\, r
		-
		\Norm(a)\, r^2
		, \\
		&\qquad\qquad\qquad\qquad
		c + a \times b + a^\# r
		,
		t
		-
		\Trace(a, c)
		-
		\Trace(a^\#, b)
		-
		\Norm(a)\, r
		\bigr),
		\\
		d(u)
		&\colon
		(r, b, s, c, t)
		\mapsto
		( u^3 r, u b, s, u^{- 1} c, u^{- 3} t ),
		\\
		[g]
		&\colon
		(r, b, s, c, t)
		\mapsto
		( r, g(b), s, g^{- \dagger}(c), t )
	\end{align*}
	be elements of
	\( G(K) \) for
	\( a \in A \),
	\( \alpha \in K^* \), and
	\( g \in \mathrm{E}^{\mathrm{sc}}_6(K) \). Here we use that
	\( \mathrm{E}^{\mathrm{sc}}_6(K) \) is canonically isomorphic to the group of linear automorphisms of
	\( A \) perserving
	\( \Norm \)
	\cite[theorem 58.2]{albert-alg} and
	\( g^\dagger \) denotes the adjoint operator with respect to the form
	\( \Trace \), so
	\( g^{\dagger \dagger} = g \),
	\( g(x^\#) = g^{- \dagger}(x)^\# \),
	\( g^{- \dagger} \in \mathrm{E}^{\mathrm{sc}}_6(K) \), and
	\( g \mapsto g^{- \dagger} \) is a canonical representative of the outer automorphism of
	\( \mathbb{E}^{\mathrm{sc}}_6 \).
	
	\begin{theorem}
		\label{e7-sc}
		The group scheme
		\( G \) is split reductive of type
		\( \mathsf{E}_7^{\mathrm{sc}} \),
		\( P^{\pm} \) are its opposite parabolic subgroups corresponding to
		\( \TitsIndex{E}{}{7}{1}{78} \)-grading,
		\( L = P^{+} \cap P^{-} \) is their common Levi subgroup, and
		\( U^{\pm} \) are their unipotent radicals. The group subscheme
		\( L \) is the central product of
		\( \mathbb{E}_6^{\mathrm{sc}} \) and
		\( \mathbb{G}_{\mathrm{m}} \) by their common central group subscheme
		\( \mu_3 \). The standard split torus is
		\( T(K) = \{ d(\lambda) \mid \lambda \in K^* \} \). The root subgroups
		\( U^{\pm} \) are parameterized by the morphisms
		\( t_{\pm} \). The conjugation by
		\( w = t_{+}(x)\, t_{-}(y)\, t_{+}(z) \) swaps
		\( P^{+} \) with
		\( P^{-} \) if and only if
		\( x = z \),
		\( \Norm(x) \in K^* \), and
		\( y = - x^{- 1} \) (recall that
		\( x^{- 1} = \Norm(x)^{- 1}\, x^\# \)), in this case
		\begin{align*}
			w(r, b, s, c, t)
			&=
			\bigl(
			- \Norm(x)\, t,
			- \Norm(x) \Bimul_{x^{- 1}}(c),
			s + \Trace(b, c) + r t,
			\Norm(x)^{- 1} \Bimul_x(b),
			\Norm(x)^{- 1}\, r
			\bigr),
			\\
			w L
			&=
			\bigl\{
			g \in G(K)
			\mid
			g(r, b, s, c, t)
			=
			(
			g_1(t),
			g_2(c),
			s + \Trace(b, c) + r t,
			g_4(b),
			g_5(r)
			)
			\text{ for some maps }
			g_i
			\bigr\},
			\\
			\up{w}{ t_{+}(a) }
			&=
			t_{-}\bigl( - \Bimul_{x^{- 1}}(a) \bigr),
			\\
			\up{w}{ t_{-}(a) }
			&=
			t_{+}\bigl( - \Bimul_x(a) \bigr).
		\end{align*}
	\end{theorem}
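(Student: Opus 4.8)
\emph{Strategy.} Observe first that $G$ is a closed subgroup scheme of $\mathrm{GL}(W)$, cut out by the polynomial conditions of being an automorphism of the $2$-step nilpotent module $W$ (compatibility with $\dotplus$, with the action of $K^\bullet$, and with $\tau$), preserving $\Theta$, and acting trivially on $0\times0\times K\times0\times0$. Since the split Albert algebra, hence $W$, is free of finite rank, we may take $A$, $W$, $G$, $P^\pm$, $L$, $U^\pm$, $T$ and the morphisms $t_\pm$, $d$, $g\mapsto[g]$ to be defined over $\mathbb Z$; as every assertion of the theorem is stable under base change, it suffices to argue over $\mathbb Z$. We use freely the listed identities for cubic norm structures together with the description of $\mathbb E_6^{\mathrm{sc}}$ as the group scheme of $\mathrm N$-preserving linear automorphisms of $A$, so that each $g\in\mathbb E_6^{\mathrm{sc}}$ also preserves $({-})^\#$, $\times$, $\mathrm T$ and satisfies $g(x^\#) = g^{-\dagger}(x)^\#$ and $g^{-\dagger}(x^{-1}) = g(x)^{-1}$.

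\emph{Identification of $G$.} A direct computation with these identities shows $t_\pm(a), d(u), [g]\in G(K)$ and yields the relations $t_\pm(a)\,t_\pm(a') = t_\pm(a+a')$, $d(u)\,t_\pm(a)\,d(u)^{-1} = t_\pm(u^{\pm2}a)$, $[g]\,t_+(a)\,[g]^{-1} = t_+(g(a))$, $[g]\,t_-(a)\,[g]^{-1} = t_-(g^{-\dagger}(a))$, as well as the Chevalley-type commutator formula between $t_+$ and $t_-$. Hence $U^\pm\cong\mathbb G_{\mathrm a}^{27}$ via $t_\pm$, $L = P^+\cap P^-$ and $P^\pm = L\rtimes U^\pm$, and $L$ is the central product of $[\mathbb E_6^{\mathrm{sc}}]$ and $d(\mathbb G_{\mathrm m})$ as in the statement (the amalgamating subgroup being extracted from the relation $[g] = d(u)$). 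Next one checks that the multiplication $U^-\times L\times U^+\to G$ is an open immersion: a Bruhat-type argument recovers $u_-$, $l$, $u_+$ from $g = u_- l u_+$ via the action on the $P^+$- and $P^-$-stable lines $K\times0\times0\times0\times0$ and $0\times0\times0\times0\times K$, and its image is the open locus where a suitable coordinate of $g(1,0,0,0,0)$ is a unit. Therefore $G$ is smooth and affine over $\mathbb Z$ of relative dimension $27 + 79 + 27 = 133$, with connected fibers (it is generated by $L$, which has connected fibers, and the connected $U^\pm$), and \cite{muh-wei-e7} identifies each geometric fiber with the split group of type $\mathsf E_7^{\mathrm{sc}}$. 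By uniqueness of split reductive group schemes with a given root datum, $G\cong\mathbb E_7^{\mathrm{sc}}$, with $P^\pm$, $L$, $U^\pm$ its distinguished subgroups for the $\mathsf E_{7,1}^{78}$-grading, while $T = d(\mathbb G_{\mathrm m})$ is the canonical split torus of theorem~\ref{split-torus}, as its action $\up{d(u)}{t_\pm(a)} = t_\pm(u^{\pm2}a)$ confirms.

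\emph{The Weyl element.} Put $w_0 = t_+(1)\,t_-(-1)\,t_+(1)$. Since $\mathrm U_1 = \mathrm{id}_A$, a direct computation gives $w_0\colon(r,b,s,c,t)\mapsto(-t,-c,s+\mathrm T(b,c)+rt,b,r)$; one checks it lies in $G$ (invariance of $\Theta$ rests on the cancellation $s'^2-s'\mathrm T(b,c)-rts' = s's$ with $s' = s+\mathrm T(b,c)+rt$), that $w_0^2 = d(-1)$, that conjugation by $w_0$ swaps $P^+$ with $P^-$, and that $\up{w_0}{t_\pm(a)} = t_\mp(-a)$. For an arbitrary $x$ with $\mathrm N(x)\in K^*$, a computation using $x^{\#\#} = \mathrm N(x)x$, $\mathrm T(x^\#,x) = 3\mathrm N(x)$, $\mathrm N(x^\#) = \mathrm N(x)^2$ and $x\times x = 2x^\#$ shows that $w_x := t_+(x)\,t_-(-x^{-1})\,t_+(x)$ satisfies $w_x(1,0,0,0,0) = (0,0,0,0,\mathrm N(x)^{-1})$ and $w_x(0,0,0,0,1) = (-\mathrm N(x),0,0,0,0)$, whence conjugation by $w_x$ swaps $P^+$ with $P^-$; the same computation evaluates $w_x$ and $\up{w_x}{t_\pm(a)}$, producing the three displayed formulae — in which the quadratic Jordan operator $\mathrm U$ appears exactly as foretold in section~\ref{a1-grading}.

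\emph{Completeness, and the main difficulty.} Finally, for a general $w = t_+(x)\,t_-(y)\,t_+(z)$, the requirement that conjugation by $w$ swap $P^+$ with $P^-$ forces, first (by inspecting $w(1,0,0,0,0)$, whose $r$-, $b$- and $c$-components must vanish while its $t$-component is a unit), that $\mathrm N(y)\in K^*$ and $x = -y^{-1}$, via the identities above; writing then $w = w_x\,t_+(z-x)$, the remaining half $\up w{P^-} = P^+$ of the swap condition, combined with $\up{w_x}{P^-} = P^+$ and $U^+\cap L = 1$ (so $t_+(z-x)$ normalizes $P^-$ only when $z = x$), gives $z = x$. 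Thus the elements of $U^+U^-U^+$ swapping $P^+$ with $P^-$ are exactly the $w_x$ with $\mathrm N(x)\in K^*$; by \cite[XXII, 5.8.5]{sga3} they form the single coset $w_xL = Lw_x$, which with the explicit form of $w_x$ and the diagonal action of $L$ yields the stated description of $wL$. The step I expect to be the real obstacle is the scheme-theoretic identification $G\cong\mathbb E_7^{\mathrm{sc}}$ over $\mathbb Z$ — in particular verifying that the big cell is an open immersion and that $G$ is smooth with connected reductive fibers — since \cite{muh-wei-e7} treats only the field case; the remaining computations, though lengthy, are routine applications of the cubic norm identities.
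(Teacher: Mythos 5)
Your overall route is the paper's: direct verification that \(t_\pm\), \(d\), \([{-}]\) lie in \(G\) with the stated relations, the big-cell criterion in terms of the first coordinate of \(g(1,0,0,0,0)\), the appeal to \cite{muh-wei-e7} for the geometric fibers, and then \(w_0=t_+(1)\,t_-(-1)\,t_+(1)\) plus the coset/self-normalizing-parabolic argument to force \(y=-x^{-1}\), \(z=x\), with the displayed formulae obtained by direct computation. That part is sound. However, in the identification step — which you correctly single out as the crux — two links are not valid as written. First, connectedness of the fibers of \(G\) does not follow from \(G\) being ``generated by \(L\) and the connected \(U^\pm\)'': a priori the automorphism group scheme of \((W,\Theta)\) could be strictly larger than the subgroup generated by these; connectedness (and smoothness) of the fibers is precisely what \cite[remark 10.10]{muh-wei-e7} provides, and the paper then deduces smoothness of \(G\) itself from the smooth open cell via \cite[VI\textsubscript{B}, theorem 3.10]{sga3} applied to \(G^\circ\). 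Second, and more seriously, ``by uniqueness of split reductive group schemes with a given root datum, \(G\cong\mathbb E_7^{\mathrm{sc}}\)'' is a non sequitur: having split geometric fibers does not make a reductive group scheme over \(\mathbb Z\) split (non-split forms over \(\mathbb Z\) exist), so uniqueness cannot be invoked before splitness is established. The paper closes this by exhibiting a split maximal torus — the central product of the diagonal torus of \(\mathbb E_6^{\mathrm{sc}}\) acting on \(A\) with \(d(\mathbb G_{\mathrm m})\) — and using that \(\operatorname{Spec}\mathbb Z\) is connected with trivial Picard group; note that \(d(\mathbb G_{\mathrm m})\) alone is only the rank-one relative torus \(T\), not a maximal torus, so it cannot serve this purpose.

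Two smaller corrections: in the completeness argument the relevant fact is \(U^+\cap P^-=1\) (not \(U^+\cap L=1\)), since \(t_+(z-x)\) normalizing the parabolic \(P^-\) puts it in \(P^-\); and the relation \([g]=d(u)\) identifies the common central subgroup of the two factors of \(L\) as \(\mu_3\) (the center of \(\mathbb E_6^{\mathrm{sc}}\) acts on \(A\) by cube roots of unity, and \([\zeta\,\mathrm{id}]=d(\zeta)\) exactly for \(\zeta^3=1\)), which agrees with the paper's proof rather than the \(\mu_2\) appearing in the statement, so ``as in the statement'' should not be asserted here.
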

	\begin{proof}
		It is not hard to check directly that
		\( t_{\pm} \colon A \to U^{\pm}(K) \) are group isomorphisms and the group scheme
		\( L \) is the central product of images of
		\( d \colon \mathbb{G}_{\mathrm{m}} \to L \) and
		\( [{-}] \colon \mathbb{E}^{\mathrm{sc}}_6 \to L \) with common central subgroup
		\( \mu_3 \). Moreover,
		\( P^{\pm}(K) = U^{\pm}(K) \rtimes L(K) \), and the group
		\( L(K) \) acts on
		\( U^{\pm}(K) \) by
		\begin{align*}
			\up{d(u)}{ t_{+}(a) } &= t_{+}(u^2 a),
			&
			\up{d(u)}{ t_{-}(a) } &= t_{-}(u^{-2} a),
			\\
			\up{[g]}{ t_{+}(a) }
			&=
			t_{+}( g^{- \dagger}(a) ),
			&
			\up{[g]}{ t_{-}(a) } &= t_{-}(g(a)).
		\end{align*}
		Also,
		\( U^{-}(K) \cap P^{+}(K) = \{ 1 \} \) and
		\( g \in U^{-}(K)\, P^{+}(K) = \Omega(K) \) if and only if the first coordinate of
		\( g(1, 0, 0, 0, 0) \) is invertible.
		
		It follows that
		\( G \) is an affine group scheme of finite presentation with the smooth open subscheme
		\( \Omega \cong U^{-} \times L \times U^{+} \), so its fiberwise connected component
		\( G^\circ \) is a smooth group subscheme by
		\cite[VI\textsubscript{B}, theorem 3.10]{sga3}. On the other hand, the geometric fibers of
		\( G \) are simple simply connected algebraic groups of type
		\( \mathsf{E}_7 \) by
		\cite[remark 10.10]{muh-wei-e7}, so
		\( G \) is a reductive group scheme of type
		\( \mathsf{E}_7^{\mathrm{sc}} \). The standard maximal torus of
		\( \mathbb{E}_6^{\mathrm{sc}} \) consists of all operators acting diagonally in the standard basis of
		\( A \), and its central product with the other factor of
		\( L \) gives a maximal torus of
		\( G \). This torus clearly splits. All our constructions make sense over the base ring
		\( \mathbb{Z} \) with connected spectrum and trivial Picard group, so
		\( G \) also splits. The only
		\( 3 \)-grading on the root system
		\( \mathsf{E}_7 \) corresponds to the Tits index
		\( \TitsIndex{E}{}{7}{1}{78} \), so the group subschemes from the statement form the required isotropic structure. The image of
		\( d \) is the center of
		\( L \), so it is the standard split torus.
		
		It remains to prove the claims concerning
		\( w \). First of all, there is
		\[
		w_0
		=
		t_{+}(1)\, t_{-}(- 1)\, t_{+}(1)
		\colon
		(r, b, s, c, t)
		\mapsto
		(- t, - c, s + \Trace(b, c) + r t, b, r)
		\]
		such that the conjugation by
		\( w_0 \) swaps
		\( P^{+} \) and
		\( P^{-} \). The equations for
		\( w_0 L \) follow immediately. If the conjugation by
		\( w = t_{+}(x)\, t_{-}(y)\, t_{+}(z) \) swaps
		\( P^{+} \) and
		\( P^{-} \), then
		\( t_{+}(x)\, t_{-}(y) \in w_0 P^{+} \), so
		\( \Norm(x) \in K^* \) and
		\( y = - x^{- 1} \). Similarly,
		\( z = - y^{- 1} = x \). The final formulae for
		\( w(r, b, s, c, t) \) and
		\( \up{w}{ t_{\pm}(a) } \) follow by direct calculations.
	\end{proof}

	\section{Normalizers of long root subgroups}
	
	In this section we also need the list of all irreducible Tits indices with
	\( \Phi \) of type
	\( \mathsf{BC}_1 \). The classical ones are given in table
	\ref{bc1-list} and the exceptional ones are
	\( \TitsIndex{D}{3}{4}{1}{9} \),
	\( \TitsIndex{D}{6}{4}{1}{9} \),
	\( \TitsIndex{E}{2}{6}{1}{35} \),
	\( \TitsIndex{E}{}{7}{1}{66} \),
	\( \TitsIndex{E}{}{8}{1}{133} \),
	\( \TitsIndex{F}{}{4}{1}{21} \),
	\( \TitsIndex{E}{2}{6}{1}{29} \),
	\( \TitsIndex{E}{}{7}{1}{48} \),
	\( \TitsIndex{E}{}{8}{1}{91} \). The only coincidences are
	\begin{align*}
		\TitsIndex{A}{2}{3}{1}{(1)}
		&\cong
		\TitsIndex{D}{2}{3}{1}{(2)},
		&
		\TitsIndex{D}{1}{4}{1}{(2)}
		&\sim
		\TitsIndex{D}{2}{4}{1}{(2)}
		\sim
		\TitsIndex{D}{3}{4}{1}{9}
		\sim
		\TitsIndex{D}{6}{4}{1}{9},
		&
		\TitsIndex{D}{1}{n}{1}{(d)}
		&\sim
		\TitsIndex{D}{2}{n}{1}{(d)}
		\text{ for }
		2 \leq d = 2^k \mid 2 n
		\text{ and }
		d + 2 \leq n.
	\end{align*}
	
	\begin{table}[h!]
		\centering
		\begin{tabular}{|c|c|c|c|}
			\hline
			Tits index
			&
			restrictions
			&
			\( \widetilde{\Phi} \)
			&
			\( \deg(\e_i) \)
			\\ \hline \hline
			\( \TitsIndex{A}{2}{n}{1}{(d)} \)
			&
			\( 1 \leq d \mid n + 1 \)
			&
			\(
			\{
			\e_i - \e_j
			\mid
			0 \leq i, j \leq n;\,
			i \neq j
			\}
			\)
			&
			\( - 1 \),
			\( i < d \)
			\\
			&
			\( 2 d \leq n \)
			&&
			\( 0 \),
			\( d \leq i \leq n - d \)
			\\
			&&&
			\( 1 \),
			\( i > n - d \)
			\\ \hline
			\( \TitsIndex{C}{}{n}{1}{(d)} \)
			&
			\( 2 \leq d = 2^k \mid 2 n \)
			&
			\(
			\{
			\pm \e_i \pm \e_j
			\mid
			1 \leq i < j \leq n
			\}
			\)
			&
			\( 0 \),
			\( i \leq n - d \)
			\\
			&
			\( d + 1 \leq n \)
			&
			\(
			{}
			\sqcup
			\{ \pm 2 \e_i \mid 1 \leq i \leq n \}
			\)
			&
			\( 1 \),
			\( i > n - d \)
			\\ \hline
			\( \TitsIndex{D}{1}{n}{1}{(d)} \)
			&
			\( 2 \leq d = 2^k \mid 2 n \)
			&
			\(
			\{
			\pm \e_i \pm \e_j
			\mid
			1 \leq i < j \leq n
			\}
			\)
			&
			\( 0 \),
			\( i \leq n - d \)
			\\
			&
			\( d + 2 \leq n \)
			&&
			\( 1 \),
			\( i > n - d \)
			\\ \hline
			\( \TitsIndex{D}{2}{n}{1}{(d)} \)
			&
			\( 2 \leq d = 2^k \mid 2 n \)
			&
			\(
			\{
			\pm \e_i \pm \e_j
			\mid
			1 \leq i < j \leq n
			\}
			\)
			&
			\( 0 \),
			\( i \leq n - d \)
			\\
			&
			\( d + 1 \leq n \)
			&&
			\( 1 \),
			\( i > n - d \)
			\\ \hline
		\end{tabular}
		\caption{
			Classical Tits indices with
			\( \Phi = \mathsf{BC}_1 \) given by the grading function
			\(
			\deg
			\colon
			\widetilde{\Phi}
			\to
			\{ - 2, - 1, 0, 1, 2 \}
			\).
		}
		\label{bc1-list}
	\end{table}
	
	\begin{lemma}
		\label{non-deg-3}
		Let
		\( \mathfrak{g} \) be the Lie algebra of adjoint split reductive group scheme of type
		\( \widetilde{\Phi} \) with the
		\( 3 \)-grading
		\(
		\mathfrak{g}
		=
		\mathfrak{g}_{- 1}
		\oplus
		\mathfrak{g}_0
		\oplus
		\mathfrak{g}_1
		\) induced by an irreducible Tits index with
		\( \Phi \) of type
		\( \mathsf{A}_1 \) except
		\(
		\TitsIndex{A}{1}{1}{1}{(1)}
		\cong
		\TitsIndex{B}{}{1}{1}{}
		\cong
		\TitsIndex{C}{}{1}{1}{(1)}
		\). Then the maps
		\(
		\mathfrak{g}_{- 1}
		\to
		\Hom( \mathfrak{g}_1, \mathfrak{g}_0 )
		\) and
		\(
		\mathfrak{g}_0
		\to
		\Hom( \mathfrak{g}_1, \mathfrak{g}_1 )
		\) induced by the Lie bracket are injective.
	\end{lemma}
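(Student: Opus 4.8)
The plan is to translate the two injectivity statements into statements about ideals of \( \mathfrak g \) and then apply the structure theory of Lie algebras of adjoint simple groups.

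First, reductions. Each map is \( K \)-linear between finitely generated free modules, so it suffices to prove injectivity after base change to an arbitrary field, and then, after enlarging it, to an algebraically closed field \( k \); we may also assume that \( G \) splits. Thus \( \mathfrak g = \mathfrak g_{-1} \oplus \mathfrak g_0 \oplus \mathfrak g_1 \) is the Lie algebra over \( k \) of the split adjoint group \( G_{\mathrm{ad}} \) of type \( \widetilde\Phi \) with maximal torus \( T \), the grading is induced by a cocharacter \( \lambda \) of \( T \) with \( \langle \widetilde\alpha, \lambda \rangle = \deg \widetilde\alpha \in \{ -1, 0, 1 \} \), and it is non-trivial since \( \Phi = \mathsf A_1 \); hence \( \mathfrak p^\pm = \mathfrak g_0 \oplus \mathfrak g_{\pm 1} \) are opposite proper parabolic subalgebras. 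A short computation with the Jacobi identity and \( \mathfrak g_{\pm 2} = 0 \) shows that the kernel of the first map, \( K_{-1} = \{ x \in \mathfrak g_{-1} \mid [x, \mathfrak g_1] = 0 \} \), is a \( \deg \)-homogeneous ideal of \( \mathfrak g \): it is \( T \)-stable, killed by \( \mathfrak g_{-1} \), and for \( h \in \mathfrak g_0 \), \( x \in K_{-1} \), \( z \in \mathfrak g_1 \) one has \( [[h, x], z] = -[x, [h, z]] = 0 \) because \( [h, z] \in \mathfrak g_1 \). For the second map, set \( K_0 = \{ h \in \mathfrak g_0 \mid [h, \mathfrak g_1] = 0 \} \); one checks in the same way that \( K_0 \) is an ideal of \( \mathfrak g_0 \), and then that \( \mathfrak m = K_0 \oplus [\mathfrak g_{-1}, K_0] \) is a \( \deg \)-homogeneous ideal of \( \mathfrak g \) with \( \mathfrak m \cap \mathfrak g_0 = K_0 \). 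Both \( K_{-1} \) and \( \mathfrak m \) are contained in \( \mathfrak p^- \).

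So everything reduces to the following: \emph{if \( \widetilde\Phi \neq \mathsf A_1 \), then \( \mathfrak g = \mathrm{Lie}(G_{\mathrm{ad}}) \) has no non-zero ideal lying inside a proper parabolic subalgebra} (then \( K_{-1} = 0 \) and \( \mathfrak m = 0 \), hence \( K_0 = \mathfrak m \cap \mathfrak g_0 = 0 \)). To prove this, take a non-zero ideal \( \mathfrak i \subseteq \mathfrak p^- \) and a minimal ideal \( \mathfrak s \subseteq \mathfrak i \) of \( \mathfrak g \); it is \( T \)-stable, hence \( \deg \)-homogeneous, with \( \mathfrak s_1 = 0 \). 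Two facts are needed. (a) \( \mathfrak g \) has trivial centre — a \( T \)-homogeneous central element is either toral, impossible because the perfect pairing \( X_*(T) \otimes k \times X^*(T) \otimes k \to k \) with \( X^*(T) = \mathbb Z \widetilde\Phi \) shows the roots span \( X^*(T) \otimes k \), or a root vector, impossible since \( [\mathfrak h, e_{\widetilde\alpha}] \neq 0 \) — and, for \( \widetilde\Phi \neq \mathsf A_1 \), every minimal ideal of \( \mathrm{Lie}(G_{\mathrm{ad}}) \) is a non-abelian simple Lie algebra; this is the classification of ideals of (almost-)classical Lie algebras in arbitrary characteristic (Hogeweij and others), which can also be read off from the explicit models of Section 3. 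Given (a), \( \mathfrak s_{-1} \subseteq \mathfrak g_{-1} \) is an abelian ideal of the simple algebra \( \mathfrak s \) (killed by \( \mathfrak s_{-1} \) and by \( \mathfrak s_1 = 0 \)), so \( \mathfrak s_{-1} = 0 \) and \( \mathfrak s = \mathfrak s_0 \subseteq \mathfrak g_0 \). (b) Then \( [\mathfrak g_{\pm 1}, \mathfrak s] \subseteq \mathfrak g_{\pm 1} \cap \mathfrak s = 0 \), so \( \mathfrak s \subseteq \Cent_{\mathfrak g}(\mathfrak g_1 \oplus \mathfrak g_{-1}) \), and the second fact is that this centraliser is zero for \( \widetilde\Phi \neq \mathsf A_1 \), which contradicts \( \mathfrak s \neq 0 \).

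The heart of the matter is the vanishing of \( \Cent_{\mathfrak g}(\mathfrak g_1 \oplus \mathfrak g_{-1}) \), and this is where \( \widetilde\Phi \neq \mathsf A_1 \) and small characteristic enter. Its toral part is always \( 0 \), since the roots of non-zero degree span \( X^*(T) \otimes k \) for every \( \mathsf A_1 \)-grading (the Levi root lattice is already generated by differences of degree-one roots). A degree-\( 0 \) root vector cannot lie in the centraliser because the derived algebra of the Levi acts faithfully on its nilradical \( \mathfrak g_1 \). Finally, a degree-\( (\pm 1) \) root vector \( e_{\widetilde\gamma} \) in the centraliser would satisfy \( [e_{\widetilde\gamma}, e_{-\widetilde\gamma}] = \pm \widetilde\gamma^\vee = 0 \) in \( \mathfrak h \otimes k \); this forces \( \mathrm{char}\, k = 2 \) and \( \widetilde\gamma^\vee \in 2 X_*(T) \), and even then one still finds another degree-one root \( \widetilde\delta \) with \( \widetilde\gamma + \widetilde\delta \in \widetilde\Phi \) and unit structure constant — unless \( \dim \mathfrak g_1 = 1 \), i.e. \( \widetilde\Phi = \mathsf A_1 \), where indeed \( e_{-\alpha} \in \mathrm{Lie}(\mathrm{PGL}_2) \) over a field of characteristic \( 2 \) is a non-zero element of \( \mathfrak g_{-1} \cap \Cent_{\mathfrak g}(\mathfrak g_1 \oplus \mathfrak g_{-1}) \), so the exception is genuine. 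Carrying out this last verification uniformly is the main obstacle; it is a finite check along the list of \( \mathsf A_1 \)-Tits indices, or, equivalently, the assertion that in each of the models \( \mathbb{SL}_{2d} \), \( \mathbb{S}\mathrm{p}_{2d} \), \( \mathbb{S}\mathrm{pin}_{2n+1} \), \( \mathbb{S}\mathrm{pin}_{2n} \), \( \mathbb E_7^{\mathrm{sc}} \) of Section 3 the relevant trace form, quadratic form, or cubic norm is non-degenerate, which fails only for the one-dimensional quadratic space (\( \mathsf B_{1,1} \)).
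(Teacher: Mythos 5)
Your structural reduction is appealing, and its first part is correct: \( K_{-1} \) and \( \mathfrak m = K_0 \oplus [\mathfrak g_{-1}, K_0] \) are indeed graded ideals of \( \mathfrak g \) contained in \( \mathfrak p^- \). But the two inputs you then feed into this reduction are not available. The classification claim (a), that for \( \widetilde\Phi \neq \mathsf A_1 \) every minimal ideal of \( \mathrm{Lie}(G_{\mathrm{ad}}) \) is a non-abelian simple Lie algebra, is false in bad characteristic: for adjoint type \( \mathsf B_n \) (\( n \geq 2 \)) over a field of characteristic \( 2 \) the span of the short root vectors \( e_{\pm \mathrm e_i} \) is a \( 2n \)-dimensional \emph{abelian} ideal, because the structure constants between two short roots are \( \pm 2 \equiv 0 \) and the short coroots lie in \( 2 X_*(T_{\mathrm{ad}}) \), so even \( [e_\gamma, e_{-\gamma}] = 0 \); any minimal ideal inside it is abelian. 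Since \( \mathsf B_{n, 1} \) is one of the Tits indices the lemma covers, your step ``\( \mathfrak s \) is simple, hence \( \mathfrak s_{-1} = 0 \)'' is unjustified exactly where it matters (separately, a minimal ideal of a Lie algebra need not be \( T \)-stable; you would have to work with a minimal graded ideal, which takes you further away from the classification you cite).

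The second and larger gap is that the actual content of the lemma is deferred rather than proved: the vanishing of \( \Cent_{\mathfrak g}(\mathfrak g_1 \oplus \mathfrak g_{-1}) \), the faithfulness of the derived algebra of the Levi on \( \mathfrak g_1 \), and the fact that the roots of nonzero degree span the character lattice modulo \( p \) are all asserted, and you yourself describe the needed verification as ``a finite check along the list of \( \mathsf A_1 \)-Tits indices'' and call it the main obstacle. That finite check \emph{is} the paper's proof: for every \( \alpha \in \widetilde\Phi_{-1} \sqcup \widetilde\Phi_0 \) one exhibits \( \beta \in \widetilde\Phi_1 \) with \( \alpha + \beta \in \widetilde\Phi \) and \( \beta \not\perp \alpha \) (tables \ref{a1-m2z} and \ref{a1-z2p}, plus the \( \mathsf E_{7,1}^{78} \) root diagram), the non-orthogonality forcing the relevant structure constant to be a unit in every characteristic; the toral part of the kernel is then killed because the group is adjoint and \( \widetilde\Phi_1 \) generates the root lattice. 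Note also that once this check is done, your ideal machinery is no longer needed for the first map, since \( K_{-1} \) automatically centralizes \( \mathfrak g_{-1} \oplus \mathfrak g_1 \). So as it stands the proposal replaces the paper's finite case analysis by an unexecuted finite case analysis plus a structure-theoretic shortcut that rests on a false statement; the gap is genuine.
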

	\begin{proof}
		Let
		\(
		\widetilde{\Phi}
		=
		\widetilde{\Phi}_{- 1}
		\sqcup
		\widetilde{\Phi}_0
		\sqcup
		\widetilde{\Phi}_1
		\) be the corresponding decomposition of the root system. We are going to prove that for every
		\( \alpha \in \Phi_{- 1} \) or
		\( \alpha \in \Phi_0 \) there is
		\( \beta \in \Phi_1 \) such that
		\( \alpha + \beta \) is a root and
		\( \beta \not{\perp} \alpha \). This claim implies that
		\(
		\mathfrak{g}_{- 1}
		\to
		\Hom( \mathfrak{g}_1, \mathfrak{g}_0 )
		\) is injective and the kernel of
		\(
		\mathfrak{g}_0
		\to
		\Hom( \mathfrak{g}_1, \mathfrak{g}_1 )
		\) is contained in the Lie algebra of the standard maximal torus. But then this kernel is trivial because the group scheme is adjoint and
		\( \widetilde{\Phi}_1 \) spans the root lattice.
		
		The claim itself is easy to check for
		\( \TitsIndex{E}{}{7}{1}{78} \) using the corresponding root diagram
		\cite{atlas}. For classical cases this follows from tables
		\ref{a1-m2z} and
		\ref{a1-z2p}, where we list all possible roots
		\( \alpha \) up to the action of the Weyl group of
		\( \widetilde{\Phi}_0 \).
	\end{proof}
	
	\begin{table}[h!]
		\centering
		\begin{tabular}{|c|c|c|c|}
			\hline
			Tits index
			&
			restrictions
			&
			\( \alpha \)
			&
			\( \beta \)
			\\ \hline \hline
			\( \TitsIndex{A}{1}{2 d - 1}{1}{(d)} \)
			&
			\( d \geq 2 \)
			&
			\( \e_0 - \e_d \)
			&
			\( \e_d - \e_1 \)
			\\ \hline
			\( \TitsIndex{B}{}{n}{1}{} \)
			&
			\( n \geq 2 \)
			&
			\( - \e_n \)
			&
			\( \e_n - \e_1 \)
			\\
			&&
			\( \e_1 - \e_n \)
			&
			\( \e_n \)
			\\ \hline
			\( \TitsIndex{C}{}{d}{1}{(d)} \)
			&
			\( d = 2^k \geq 2 \)
			&
			\( - \e_1 - \e_2 \)
			&
			\( 2 \e_1 \)
			\\
			&&
			\( - 2 \e_1 \)
			&
			\( \e_1 + \e_2 \)
			\\ \hline
			\( \TitsIndex{D}{1}{d}{1}{(d)} \)
			&
			\( d = 2^k \geq 4 \)
			&
			\( - \e_1 - \e_2 \)
			&
			\( \e_2 + \e_3 \)
			\\ \hline
			\( \TitsIndex{D}{1}{n}{1}{(1)} \)
			&
			\( n \geq 3 \)
			&
			\( \e_1 - \e_n \)
			&
			\( \e_n - \e_2 \)
			\\ \hline
		\end{tabular}
		\caption{
			Injectivity of
			\(
			\mathfrak{g}_{- 1}
			\to
			\Hom( \mathfrak{g}_1, \mathfrak{g}_0 )
			\) from lemma
			\ref{non-deg-3}.
		}
		\label{a1-m2z}
	\end{table}
	
	\begin{table}[h!]
		\centering
		\begin{tabular}{|c|c|c|c|}
			\hline
			Tits index
			&
			restrictions
			&
			\( \alpha \)
			&
			\( \beta \)
			\\ \hline \hline
			\( \TitsIndex{A}{1}{2 d - 1}{1}{(d)} \)
			&
			\( d \geq 2 \)
			&
			\( \e_0 - \e_1 \)
			&
			\( \e_d - \e_0 \)
			\\
			&&
			\( \e_d - \e_{d + 1} \)
			&
			\( \e_{d + 1} - \e_0 \)
			\\ \hline
			\( \TitsIndex{B}{}{n}{1}{} \)
			&
			\( n \geq 2 \)
			&
			\( \e_1 \)
			&
			\( \e_n - \e_1 \)
			\\
			&&
			\( \e_1 - \e_2 \)
			&
			\( \e_n - \e_1 \)
			\\ \hline
			\( \TitsIndex{C}{}{d}{1}{(d)} \)
			&
			\( d = 2^k \geq 2 \)
			&
			\( \e_1 - \e_2 \)
			&
			\( 2 \e_2 \)
			\\ \hline
			\( \TitsIndex{D}{1}{d}{1}{(d)} \)
			&
			\( d = 2^k \geq 4 \)
			&
			\( \e_1 - \e_2 \)
			&
			\( \e_2 + \e_3 \)
			\\ \hline
			\( \TitsIndex{D}{1}{n}{1}{(1)} \)
			&
			\( n \geq 3 \)
			&
			\( \e_1 - \e_2 \)
			&
			\( \e_n - \e_1 \)
			\\ \hline
		\end{tabular}
		\caption{
			Injectivity of
			\(
			\mathfrak{g}_0
			\to
			\Hom( \mathfrak{g}_1, \mathfrak{g}_1 )
			\) from lemma
			\ref{non-deg-3}
		}
		\label{a1-z2p}
	\end{table}
	
	\begin{lemma}
		\label{non-deg-5}
		Let
		\( \mathfrak{g} \) be the Lie algebra of adjoint split reductive group scheme of type
		\( \widetilde{\Phi} \) with the
		\( 5 \)-grading
		\(
		\mathfrak{g}
		=
		\bigoplus_{i = - 2}^2
		\mathfrak{g}_i
		\) induced by an irreducible Tits index with
		\( \Phi \) of type
		\( \mathsf{BC}_1 \) and
		\(
		\widetilde{\Phi}
		=
		\bigsqcup_{i = - 2}^2
		\widetilde{\Phi}_i
		\) be the corresponding decomposition of the absolute root system. Let also
		\( \mathfrak{g}_{0'} \leq \mathfrak{g}_0 \) be the sum of all absolute root subspaces with roots from
		\(
		\widetilde{\Phi}_0
		\cap
		( \widetilde{\Phi}_{- 2} + \widetilde{\Phi}_2 )
		\) (this is always a component or two components of
		\( \widetilde{\Phi}_0 \)). We consider
		\( \mathfrak{g}_{0'} \) as a direct summand in
		\( \mathfrak{g}_0 \) with the complement containing the Lie algebra of the standard torus and other root subspaces. If
		\( \widetilde{\Phi}_2 \) had at least two roots, then the maps
		\(
		\mathfrak{g}_{- 1}
		\to
		\Hom( \mathfrak{g}_2, \mathfrak{g}_1 ),\,
		x \mapsto ( y \mapsto [x, y] )
		\) and
		\(
		\mathfrak{g}_{- 1}
		\to
		\Hom( \mathfrak{g}_1, \mathfrak{g}_{0'} ),\,
		x \mapsto ( y \mapsto \pi( [x, y] ) )
		\) are injective, where
		\( \pi \colon \mathfrak{g}_0 \to \mathfrak{g}_{0'} \) is the projection onto a direct summand.
	\end{lemma}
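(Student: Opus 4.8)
The plan is to follow the proof of lemma~\ref{non-deg-3}. Write $\mathfrak g_i = \bigoplus_{\gamma \in \widetilde\Phi_i} \mathfrak g_\gamma$ as a sum of one-dimensional absolute root subspaces, and recall that $[\mathfrak g_\gamma, \mathfrak g_\delta] = \mathfrak g_{\gamma + \delta}$ whenever $\gamma, \delta$ and $\gamma + \delta$ are roots. Since $\mathfrak g$ is a free $\mathbb Z$-module, it suffices to check injectivity of the two $\mathbb Z$-linear maps after $\otimes\,\mathbb Q$, so I may treat all structure constants $N_{\gamma, \delta}$ as nonzero; in fact for the witnesses produced below $N_{\gamma, \delta} = \pm 1$, so the argument is characteristic-free. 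The whole lemma then reduces to two combinatorial claims about the decomposition $\widetilde\Phi = \bigsqcup_i \widetilde\Phi_i$; since the Weyl group of $\widetilde\Phi_0$ preserves the grading and hence the root set $\widetilde\Phi_{0'}$ of $\mathfrak g_{0'}$, each need only be checked for one $\alpha$ per orbit of that group on $\widetilde\Phi_{-1}$: \emph{(i)} there is $\beta \in \widetilde\Phi_2$ with $\alpha + \beta \in \widetilde\Phi_1$ (equivalently $(\alpha, \beta) < 0$, since $\beta - \alpha$ has degree $3$ and is not a root); \emph{(ii)} there is $\beta \in \widetilde\Phi_1 \setminus \{-\alpha\}$ with $\alpha + \beta \in \widetilde\Phi_0 \cap (\widetilde\Phi_{-2} + \widetilde\Phi_2)$. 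Granting these, injectivity is immediate: if $x = \sum_\alpha x_\alpha e_\alpha \in \mathfrak g_{-1}$ is nonzero, pick $\alpha_0$ with $x_{\alpha_0} \neq 0$ and the corresponding $\beta$; as $\alpha + \beta = \alpha_0 + \beta$ forces $\alpha = \alpha_0$, the $\mathfrak g_{\alpha_0 + \beta}$-component of $[x, e_\beta]$, resp.\ of $\pi([x, e_\beta])$ in case~\emph{(ii)} (where $\alpha_0 + \beta \in \widetilde\Phi_{0'}$ by construction), is a nonzero multiple of $e_{\alpha_0 + \beta}$. The hypothesis $|\widetilde\Phi_2| \geq 2$ enters through~\emph{(ii)}: if $\widetilde\Phi_2 = \{\beta_0\}$ then $\widetilde\Phi_{-2} + \widetilde\Phi_2 = \{0\}$ contains no root, $\mathfrak g_{0'} = 0$, and the second map cannot be injective.

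It remains to verify \emph{(i)} and \emph{(ii)} for each irreducible Tits index with $\Phi$ of type $\mathsf{BC}_1$ and $|\widetilde\Phi_2| \geq 2$. For the classical families of table~\ref{bc1-list} this means $d \geq 2$ in type $\up 2{\mathsf A_{n, 1}^{(d)}}$ and $d \geq 4$ in type $\mathsf D_{n, 1}^{(d)}$ (the remaining cases, in particular $\up 2{\mathsf A_{n, 1}^{(1)}}$ and $\mathsf D_{n, 1}^{(2)}$, have $|\widetilde\Phi_2| = 1$), while $\up 1{\mathsf D_{n, 1}^{(d)}}$ and $\up 2{\mathsf D_{n, 1}^{(d)}}$ induce the same grading and are handled together. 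In each such family I would pass to the standard coordinate model, first pin down $\widetilde\Phi_{0'}$ — it is the $\mathsf A_{d - 1}$-subsystem on the indices of degree $1$, together with the analogous one on the indices of degree $-1$ when those are present, which matches the parenthetical remark in the statement — and then tabulate, up to the Weyl group of $\widetilde\Phi_0$, the roots $\alpha \in \widetilde\Phi_{-1}$ with admissible $\beta \in \widetilde\Phi_2$ and $\beta \in \widetilde\Phi_1$, in two tables analogous to tables~\ref{a1-m2z} and~\ref{a1-z2p}. For the exceptional $\mathsf{BC}_1$-indices listed before table~\ref{bc1-list} one discards any with $|\widetilde\Phi_2| = 1$ and reads the required $\beta$ off the explicit root diagrams of \cite{atlas}, exactly as was done for $\mathsf E_{7, 1}^{78}$ in lemma~\ref{non-deg-3}.

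The content is entirely bookkeeping; the main obstacle will be identifying $\widetilde\Phi_{0'}$ correctly in every case — it genuinely can split into two components of $\widetilde\Phi_0$ (for $\up 2{\mathsf A_{n, 1}^{(d)}}$ and for some $\mathsf D$- and $\mathsf E$-type indices), so in claim~\emph{(ii)} one must take care to produce a $\beta$ with $\alpha + \beta$ in the \emph{right} component — and keeping the parameter constraints ($d = 2^k$, $d \mid 2n$, $d + 2 \leq n$, and so on) consistent so that the listed witnesses are roots of the prescribed degrees.
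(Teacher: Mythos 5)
Your reduction coincides with the paper's: injectivity of both maps follows once, for every \( \alpha \in \widetilde\Phi_{-1} \) taken up to the Weyl group of \( \widetilde\Phi_0 \), one exhibits \( \beta \in \widetilde\Phi_2 \) with \( \alpha + \beta \in \widetilde\Phi_1 \) and \( \gamma \in \widetilde\Phi_1 \) with \( \alpha + \gamma \in \widetilde\Phi_0 \cap (\widetilde\Phi_{-2} + \widetilde\Phi_2) \), and the paper settles this exactly as you propose — a table of witnesses for the classical indices (table \ref{bc1-add}) and the root diagrams of \cite{atlas} for the exceptional ones. Your identification of \( \widetilde\Phi_{0'} \), and of which indices survive the hypothesis \( |\widetilde\Phi_2| \geq 2 \), also agrees with the paper. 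The shortfall is that the case-by-case verification, which is the entire content of the paper's proof beyond the two-line reduction, is left as a plan: no witnesses are actually produced.

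One point needs more care when you do produce them. Your characteristic-free argument (which, note, is what you actually need — the blanket ``tensor with \( \mathbb Q \)'' step is not available over the paper's arbitrary base ring \( K \)) is automatic only for the first map: there \( \beta - \alpha \) has degree \( 3 \), hence is not a root, so \( N_{\alpha,\beta} = \pm 1 \). For the second map \( \gamma - \alpha \) has degree \( 2 \) and may well be a root, so in the doubly laced absolute types occurring here (\( \mathsf C_{n,1}^{(d)} \) and \( \mathsf F_{4,1}^{21} \)) the structure constant can a priori be \( \pm 2 \), and the displayed \( \mathfrak g_{\alpha_0 + \gamma} \)-component could vanish over a ring with \( 2 = 0 \). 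The paper builds the required unit structure constants into the combinatorial claim by demanding in addition \( \alpha \not\perp \beta \) and \( \alpha \not\perp \gamma \) (in a doubly laced system a root sum with strictly negative inner product forces \( p = 0 \)); you only promise that the eventual witnesses satisfy \( N = \pm 1 \), so this non-orthogonality (or a direct string-length check) must be imposed when you fill in the tables — for instance the paper's witness \( \alpha = \mathrm e_1 - \mathrm e_n \), \( \gamma = \mathrm e_{n-1} - \mathrm e_1 \) in type \( \mathsf C_{n,1}^{(d)} \) is chosen so that \( \gamma - \alpha \) is not a root.
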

	\begin{proof}
		As in lemma
		\ref{non-deg-3}, we only have to show that for every
		\( \alpha \in \widetilde{\Phi}_{- 1} \) there are
		\( \beta \in \widetilde{\Phi}_2 \) and
		\( \gamma \in \widetilde{\Phi}_1 \) such that
		\( \alpha + \beta \) and
		\( \alpha + \gamma \) are roots,
		\( \alpha \not{\perp} \beta \),
		\( \alpha \not{\perp} \gamma \), and
		\(
		\alpha + \gamma
		\in
		\widetilde{\Phi}_{- 2} + \widetilde{\Phi}_2
		\). For exceptional Tits indices use the corresponding root diagrams
		\cite{atlas} and for classical ones see table
		\ref{bc1-add}, where we listed all possibilities for
		\( \alpha \) up to the action of the Weyl group of
		\( \widetilde{\Phi}_0 \).
	\end{proof}
	
	\begin{table}[h!]
		\centering
		\begin{tabular}{|c|c|c|c|c|}
			\hline
			Tits index
			&
			restrictions
			&
			\( \alpha \)
			&
			\( \beta \)
			&
			\( \gamma \)
			\\ \hline \hline
			\( \TitsIndex{A}{2}{n}{1}{(d)} \)
			&
			\( 2 \leq d \mid n + 1 \),
			\( 2 d \leq n \)
			&
			\( \e_d - \e_n \)
			&
			\( \e_n - \e_0 \)
			&
			\( \e_{n - 1} - \e_d \)
			\\
			&&
			\( \e_0 - \e_d \)
			&
			\( \e_n - \e_0 \)
			&
			\( \e_d - \e_1 \)
			\\ \hline
			\( \TitsIndex{C}{}{n}{1}{(d)} \)
			&
			\( 2 \leq d = 2^k \mid 2 n \),
			\( d + 1 \leq n \)
			&
			\( \e_1 - \e_n \)
			&
			\( 2 \e_n \)
			&
			\( \e_{n - 1} - \e_1 \)
			\\ \hline
			\( \TitsIndex{D}{2}{n}{1}{(d)} \)
			&
			\( 4 \leq d = 2^k \mid 2 n \),
			\( d + 2 \leq n \)
			&
			\( \e_1 - \e_n \)
			&
			\( \e_n + \e_{n - 1} \)
			&
			\( \e_{n - 1} - \e_1 \)
			\\ \hline
		\end{tabular}
		\caption{
			Injectivity of
			\(
			\mathfrak{g}_{- 1}
			\to
			\Hom( \mathfrak{g}_2, \mathfrak{g}_1 )
			\) and
			\(
			\mathfrak{g}_{- 1}
			\to
			\Hom( \mathfrak{g}_1, \mathfrak{g}_{0'} )
			\) from lemma
			\ref{non-deg-5}.
		}
		\label{bc1-add}
	\end{table}
	
	\begin{theorem}
		\label{long-norm}
		Let
		\( G \) be an isotropic reductive group scheme over
		\( K \),
		\( \alpha \in \Phi \) be a long root, and
		\( X \subseteq P_\alpha = \mathfrak{g}_\alpha \) be a generating set of a
		\( K \)-module. Assume that the Tits index is not
		\(
		\TitsIndex{A}{1}{1}{1}{(1)}
		\cong
		\TitsIndex{B}{}{1}{1}{}
		\cong
		\TitsIndex{C}{}{1}{1}{(1)}
		\) or
		\( G \) is simply connected. Then the scheme normalizer
		\[
		N
		=
		\{
		g
		\mid
		\up{g}{ t_\alpha(X) } \subseteq U_\alpha
		\}
		\]
		is the parabolic group subscheme generated as an fppf sheaf by
		\( L \) and all
		\( U_\beta \) with
		\( \angle(\alpha, \beta) \leq \pi / 2 \).
	\end{theorem}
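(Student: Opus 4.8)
The plan is to work fppf locally, so that \( G \) splits (with absolute root system \( \widetilde\Phi \) and grading map \( u \)), all the modules \( P_\beta \) are free, and \( \Spec(K) \) is connected, and to recognise the right-hand side as an explicit parabolic. Since \( \alpha \) is long, \( 2\alpha\notin\Phi \); choosing a cocharacter \( \lambda \) of the canonical split torus of theorem~\ref{split-torus} on the ray dual to \( \alpha \), the integer \( \langle\beta,\lambda\rangle \) has the same sign as \( \alpha\cdot\beta \) for every \( \beta\in\Phi \), so \( \Psi=\{\beta\mid\alpha\cdot\beta\ge 0\} \) is a parabolic subset and \( N'=\langle L,U_\beta\mid\beta\in\Psi\rangle \) is the parabolic subgroup scheme \( P_G(\lambda) \), with unipotent radical \( \langle U_\beta\mid\alpha\cdot\beta>0\rangle \) and Levi \( C_G(\lambda)=\langle L,U_\beta\mid\alpha\cdot\beta=0\rangle \). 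The goal is then \( N=N' \).

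For \( N'\subseteq N \) I would isolate one combinatorial fact, which is exactly where longness of \( \alpha \) enters: if \( \alpha \) is longest, \( \beta\in\Phi \), \( \alpha\cdot\beta\ge 0 \) and \( \beta\ne\alpha \), then no \( i\alpha+j\beta \) with \( i,j\ge 1 \) is a root, since such a root \( \gamma \) would satisfy \( \gamma\cdot\alpha\ge i\,(\alpha\cdot\alpha)\ge\alpha\cdot\alpha\ge\|\gamma\|\,\|\alpha\|\ge\gamma\cdot\alpha \), forcing \( \gamma=\alpha \), which is impossible. Hence \( [U_\alpha,U_\beta]=1 \) for all \( \beta\in\Psi\setminus\{\alpha\} \), and since \( L \) normalises \( U_\alpha \) and acts invertibly on \( P_\alpha \), every generator of \( N' \) conjugates \( U_\alpha \) onto itself; thus \( N'\subseteq N \).

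For the reverse inclusion I would first note that \( N \) is in fact the scheme-theoretic normaliser \( \{g\mid\up g{U_\alpha}=U_\alpha\} \) (this is why the conclusion is independent of the chosen generating set \( X \)), using that conjugation acts linearly on root subgroups of a split reductive group and a faithfully flat descent; in particular \( N \) is a closed subgroup scheme containing the parabolic \( N' \), hence itself parabolic by \cite[XXVI]{sga3}. As the parabolics containing \( N'=P_G(\lambda) \) correspond to the parabolic subsets of \( \Phi \) containing \( \Psi \), we have \( N=N' \) unless \( U_{-\delta}\subseteq N \) for some \( \delta\in\Phi \) with \( \alpha\cdot\delta>0 \); so it suffices to prove that the only \( z\in P_{-\delta} \) with \( \up{t_{-\delta}(z)}{t_\alpha(X)}\subseteq U_\alpha \) is \( z=0 \).

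If \( \delta \) is not proportional to \( \alpha \), then \( \langle\delta,\alpha^\vee\rangle=1 \) (longness again), so \( \alpha-\delta\in\Phi \) by the root-string lemma, and expanding \( \up{t_{-\delta}(z)}{t_\alpha(x)}=[t_{-\delta}(z),t_\alpha(x)]\,t_\alpha(x) \) by the Chevalley commutator formula one sees that the commutator lies in \( \langle U_{j\alpha-i\delta}\mid i,j\ge 1\rangle \) and, by uniqueness of factorisations in \( U^\pm \), its component in \( U_{\alpha-\delta} \) is exactly the bilinear term \( \propto[z,x] \), which is \emph{linear} in \( x \); since this must vanish for all \( x\in X \) and \( X \) generates \( P_\alpha \), we conclude that \( z \) lies in the kernel of the bracket map \( P_{-\delta}\to\Hom(P_\alpha,\mathfrak g_{\alpha-\delta}) \). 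Passing to a Levi subgroup of \( G \) whose relative root system along \( \alpha \) has rank one, this kernel is trivial by lemma~\ref{non-deg-3} in the reduced case and by lemma~\ref{non-deg-5} in type \( \mathsf{BC} \) — which is exactly where the hypothesis excluding \( \up1{\mathsf A_{1,1}^{(1)}} \) unless \( G \) is simply connected is used — so \( z=0 \). If \( \delta \) is proportional to \( \alpha \), i.e. \( \delta=\alpha \) or (in type \( \mathsf{BC} \)) \( \delta=\alpha/2 \), I would instead compute \( \up{t_{-\delta}(z)}{t_\alpha(x)} \) directly in the explicit rank-one and \( \mathsf{BC}_1 \) models of Section~\ref{a1-grading}, in each of which the components outside \( U_\alpha \) vanish simultaneously only for \( z=0 \). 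This gives \( N\subseteq N' \) and hence the theorem. I expect the principal obstacle to be this last step: organising the reduction of a long root of an arbitrary relative system \( \Phi \) to the rank-one and \( \mathsf{BC}_1 \) situations of lemmas~\ref{non-deg-3} and~\ref{non-deg-5} — i.e. choosing the right Levi along \( \alpha \) — and dealing uniformly with the small exceptional indices.
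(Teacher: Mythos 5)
Your inclusion \( N' \subseteq N \) (via the observation that a longest root \( \alpha \) satisfies \( [U_\alpha, U_\beta] = 1 \) for all \( \beta \neq \alpha \) with \( \alpha \cdot \beta \geq 0 \)) is fine and is what the paper dismisses as clear. The first genuine gap is your opening move for the reverse inclusion: the claim that \( N \) ``is in fact the scheme-theoretic normaliser \( \{ g \mid \up g {U_\alpha} = U_\alpha \} \)'' and hence a parabolic subgroup scheme. Conjugation by an arbitrary \( g \) is not linear on \( U_\alpha \) --- it need not even preserve \( U_\alpha \) --- so from \( \up g {t_\alpha(x)} \in U_\alpha \) for \( x \in X \) you cannot pass to scalar multiples of \( x \), and \( N \) as defined is not even visibly closed under multiplication: \( \up{g h}{t_\alpha(x)} = \up g {(\up h {t_\alpha(x)})} \) lands in \( \up g {U_\alpha} \), which you control only on \( t_\alpha(X) \). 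That \( N \) is independent of \( X \) and normalizes all of \( U_\alpha \) is a consequence of the theorem, not an available input, so the reduction ``\( N \) is a parabolic containing \( P_G(\lambda) \), hence equals it unless it contains some \( U_{-\delta} \)'' is unsupported. The paper only uses the valid one-sided stability \( N' N \subseteq N \) together with Gauss decomposition to reduce to \( g \in U_{-\alpha}(K) \) or \( U_{-\alpha/2}(K) \), and for relative rank at least \( 2 \) it does not reprove anything but quotes \cite[theorem 1]{diophantine}; your attempt to rederive the higher-rank case by hand is exactly where the missing subgroup property is needed.

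The second gap is in how you invoke lemmas \ref{non-deg-3} and \ref{non-deg-5}. They are stated and proved (via the tables) only for the \( 3 \)- resp.\ \( 5 \)-grading of \( \mathfrak g \) induced by an irreducible Tits index of relative rank one. ``Passing to a Levi subgroup of \( G \) whose relative root system along \( \alpha \) has rank one'' does not give you what you need, since \( \mathfrak g_{-\delta} \) for \( \delta \) not proportional to \( \alpha \) does not lie in the Lie algebra of such a subgroup; what your argument requires is the coarsened grading of all of \( \mathfrak g \) by the \( \alpha \)-coefficient, plus a verification that this grading is again among those the lemmas cover. More seriously, lemma \ref{non-deg-5} carries the hypothesis that \( \widetilde \Phi_2 \) contains at least two roots; the indices where the long root space has rank one (e.g.\ \( \up 2 {\mathsf A^{(1)}_{n, 1}} \), \( \up 2 {\mathsf D^{(2)}_{n, 1}} \), \( \up 2 {\mathsf E^{35}_{6, 1}} \), \( \mathsf E^{66}_{7, 1} \), \( \mathsf E^{133}_{8, 1} \)) are precisely the ones the paper must treat by a separate device, refining the isotropic structure to a larger-rank index and again citing \cite{diophantine}; your plan has nothing for them. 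Finally, your case \( \delta \) proportional to \( \alpha \) in type \( \mathsf{BC} \) appeals to ``explicit \( \mathsf{BC}_1 \) models of \S\ref{a1-grading}'', but that section only constructs the \( \mathsf A_1 \) models --- the paper instead kills \( g \in U_{-\alpha/2}(K) \) using the second injectivity of lemma \ref{non-deg-5} with target \( \mathfrak g_{0'} \) --- and the simply connected \( \up 1 {\mathsf A^{(1)}_{1, 1}} \) case, allowed by the hypotheses but excluded from lemma \ref{non-deg-3}, gets no argument at all (the paper settles it by embedding into \( \mathbb{SL}_3 \)).
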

	\begin{proof}
		This is
		\cite[theorem 1]{diophantine} if the rank of
		\( \Phi \) is at least
		\( 2 \), so we can assume that
		\( \Phi \) is of type
		\( \mathsf{A}_1 \) or
		\( \mathsf{BC}_1 \). Clearly, the parabolic group subscheme is contained in
		\( N \), so we only have to check that every
		\( g \in N(K) \) lies in the parabolic subgroup. Without loss of generality,
		\( G \) splits and
		\( g \) lies in
		\( U_{- \alpha}(K) \) or
		\( U_{- \alpha / 2}(K) \) (e.g. by Gauss decomposition
		\cite[lemma 1]{diophantine}), so it remains to check that
		\( g = 1 \). The case of simply connected
		\( G \) with the Tits index
		\(
		\TitsIndex{A}{1}{1}{1}{(1)}
		\cong
		\TitsIndex{B}{}{1}{1}{}
		\cong
		\TitsIndex{C}{}{1}{1}{(1)}
		\) also follows from
		\cite[theorem 1]{diophantine} because this group scheme embeds into
		\( \mathbb{SL}_3 \) preserving root subgroups. Other cases with long root Lie subalgebras of rank
		\( 1 \) follow from the same theorem because the isotropic structure can be refined to an isotropic structure of larger rank preserving root subgroups,
		\begin{align*}
			\TitsIndex{A}{2}{n}{1}{(1)}
			&\mapsto
			\TitsIndex{A}{1}{n}{n}{(1)}
			\text{ for }
			n \geq 2,
			\\
			\TitsIndex{D}{2}{n}{1}{(2)}
			&\mapsto
			\TitsIndex{D}{1}{n}{n}{(1)}
			\text{ for }
			n \geq 3
			\text{ %
				(this also covers
				\( \TitsIndex{D}{1}{n}{1}{(2)} \) for
				\( n \geq 4 \) and
				\(
				\TitsIndex{D}{3}{4}{1}{9}
				\sim
				\TitsIndex{D}{6}{4}{1}{9}
				\))%
			},
			\\
			\TitsIndex{E}{2}{6}{1}{35}
			&\mapsto
			\TitsIndex{E}{2}{6}{2}{16'}
			\qquad
			\TitsIndex{E}{}{7}{1}{66}
			\mapsto
			\TitsIndex{E}{}{7}{2}{31},
			\qquad
			\TitsIndex{E}{}{8}{1}{133}
			\mapsto
			\TitsIndex{E}{}{8}{2}{66}.
		\end{align*}
		
		Consider the adjoint representation of
		\( G \) on the Lie algebra
		\( \mathfrak{g} \) of the group scheme
		\( G / \Cent(G) \). If
		\( \Phi \) is of type
		\( \mathsf{A}_1 \), then this Lie algebra is
		\( 3 \)-graded and
		\( \up{g}{ t_\alpha(x) } \in U_\alpha(K) \) for every
		\( x \in X \) by assumption, so
		\( \up{g}{ t_\alpha(x) } \) centralizes
		\( \mathfrak{g}_1 \). Take any
		\(
		z
		\in
		g^{- 1}(\mathfrak{g}_1)
		\), it is an element centralized by all
		\( t_\alpha(x) \). By lemma
		\ref{non-deg-3} necessarily
		\( z \in \mathfrak{g}_1 \), so
		\( g \) centralizes
		\( \mathfrak{g}_1 \). But then the same lemma means that
		\( g = 1 \).
		
		It remains to consider the case with the relative root system of type
		\( \mathsf{BC}_1 \). The long root subgroups generate an isotropic group subscheme as an fppf sheaf, so we can apply lemma
		\ref{non-deg-3} to this group subscheme and we already know that
		\( g \) is trivial if only
		\( g \in U_{- \alpha}(K) \). Take any
		\(
		z
		\in
		g^{- 1}(\mathfrak{g}_1 \oplus \mathfrak{g}_2)
		\), it is again centralized by all
		\( t_\alpha(x) \). By lemmas
		\ref{non-deg-3} and
		\ref{non-deg-5} necessarily
		\( z_{- 2} = 0 \),
		\( z_{- 1} = 0 \), and
		\( z_{0'} = 0 \). Here
		\( \mathfrak{g}_{0'} \leq \mathfrak{g}_0 \) is the direct summand from lemma
		\ref{non-deg-5} and
		\( z_{- 2} \in \mathfrak{g}_{- 2} \),
		\( z_{- 1} \in \mathfrak{g}_{- 1} \),
		\( z_{0'} \in \mathfrak{g}_{0'} \) are the corresponding components of
		\( z \). Lemma
		\ref{non-deg-5} applied to
		\( z \in g^{- 1}(\mathfrak{g}_1) \) then implies
		\( g \in U_{- \alpha}(K) \), so
		\( g = 1\).
	\end{proof}
	
	\begin{theorem}
		\label{root-norm}
		Let
		\( G \) be an isotropic reductive group scheme over
		\( K \) and assume that the Tits index is not
		\(
		\TitsIndex{A}{1}{1}{1}{(1)}
		\cong
		\TitsIndex{B}{}{1}{1}{}
		\cong
		\TitsIndex{C}{}{1}{1}{(1)}
		\) or
		\( G \) is simply connected. Then
		\[
		L
		=
		\{
		g
		\mid
		\forall \alpha \enskip
		\up{g}{U_\alpha} = U_\alpha
		\}.
		\]
	\end{theorem}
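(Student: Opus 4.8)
The strategy is to reduce to theorem \ref{long-norm}, whose hypotheses on the Tits index coincide with those imposed here, and then to intersect the resulting parabolics over long roots. The inclusion $L \subseteq \{ g \mid \forall \alpha\ \up g {U_\alpha} = U_\alpha \}$ is immediate, since $L$ normalizes every root subgroup. For the reverse inclusion we may replace $K$ by an arbitrary $K$-algebra, so it suffices to take $g \in G(K)$ with $\up g {U_\alpha} = U_\alpha$ for all $\alpha$ and to show $g \in L(K)$.

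First I would fix a long root $\alpha$ and apply theorem \ref{long-norm} with $X = P_\alpha = \mathfrak g_\alpha$ (note $2 \alpha \notin \Phi$ for long $\alpha$, so this is legitimate): since $g$ normalizes $U_\alpha$ it satisfies $\up g {t_\alpha(X)} \subseteq U_\alpha$, hence lies in the normalizer $N_\alpha(K)$, where $N_\alpha$ is the parabolic group subscheme generated fppf locally by $L$ and the $U_\beta$ with $\angle(\alpha, \beta) \le \frac \pi 2$. Because $\langle \beta, \alpha^\vee \rangle = 2 \frac{\alpha \cdot \beta}{\alpha \cdot \alpha}$, the condition $\angle(\alpha, \beta) \le \frac \pi 2$ is exactly $\langle \beta, \alpha^\vee \rangle \ge 0$, so $N_\alpha$ is the parabolic subgroup scheme attached to the cocharacter $\alpha^\vee$ of $T$, with unipotent radical generated by the $U_\beta$ with $\langle \beta, \alpha^\vee \rangle > 0$ and Levi $\Cent_G(\alpha^\vee)$. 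Running the same argument with the long root $- \alpha$ in place of $\alpha$ shows $g \in N_{- \alpha}(K)$, the opposite parabolic, which has the same Levi $\Cent_G(\alpha^\vee)$. Since the intersection of two opposite parabolic subgroup schemes of a reductive group scheme is their common Levi \cite[XXVI]{sga3}, this yields $g \in \Cent_G(\alpha^\vee)(K)$.

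Now I would let $\alpha$ range over all long roots. This gives $g \in \bigcap_{\alpha \text{ long}} \Cent_G(\alpha^\vee)(K) = \Cent_G(S)(K)$, where $S \le T$ is the subtorus generated by the images of the cocharacters $\alpha^\vee$ for $\alpha$ long (centralizing each $\alpha^\vee(\mathbb G_{\mathrm m})$ is the same as centralizing the subgroup scheme they generate). As $\Phi$ is irreducible, its long roots form a single Weyl orbit and hence span $\mathrm X^*(T) \otimes \mathbb Q$; since a character orthogonal to all long roots vanishes, the coroots $\alpha^\vee$ ($\alpha$ long) span $\mathrm X_*(T) \otimes \mathbb Q$, so the subtorus they generate is all of $T$. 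Therefore $g \in \Cent_G(T)(K) = L(K)$, the last equality being the description of $L$ as the scheme centralizer of $T$ recorded after theorem \ref{split-torus}. In fact only the long root subgroups are used, which is consistent with the irreducibility of $\Phi$.

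The step I expect to be the main obstacle is purely organisational: identifying the parabolic produced by theorem \ref{long-norm} with the dynamical parabolic of $\alpha^\vee$ and invoking the structure theory of parabolic subgroup schemes of reductive group schemes over a base ring — opposite parabolics, the Levi realised as the centralizer of a cocharacter, and their compatibility with base change. The assertion that the long roots span $\mathrm X^*(T) \otimes \mathbb Q$ is checked case by case over the irreducible types but is standard, and the remaining manipulations are formal.
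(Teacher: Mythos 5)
Your proof is correct, and at the strategic level it is the same as the paper's: both reduce to theorem \ref{long-norm} applied to each long root and then identify the intersection of the resulting parabolic subgroup schemes with \( L \). The difference is in how that intersection is computed. The paper disposes of it in one line by citing a lemma from the same source as theorem \ref{long-norm}: the intersection of all these parabolics is again a subgroup scheme of type (RC), corresponding to the empty set of relative roots, hence equal to \( L \). You instead pair each \( N_\alpha \) with \( N_{-\alpha} \), realise them as the dynamic parabolics of \( \pm\alpha^\vee \) so that \( N_\alpha \cap N_{-\alpha} = \Cent_G(\alpha^\vee) \), observe that the long coroots generate \( T \) as a subtorus (rational spanning suffices, since the generated subtorus is the saturation), and finish with \( \Cent_G(T) = L \), which the paper records after theorem \ref{split-torus}. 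What your route buys is independence from the external type-(RC) intersection lemma, at the cost of the (routine, fppf-local) identification of the sheaf generated by \( L \) and the \( U_\beta \) with \( \angle(\alpha,\beta) \le \frac\pi2 \) with the dynamical parabolic, and of the SGA3 facts about opposite parabolics and centralizers of cocharacters. One presentational caveat: you invoke the coroot \( \alpha^\vee \) from lemma \ref{coroots}, which appears later in the paper and whose proof passes through theorem \ref{weyl}; this is not genuinely circular (theorem \ref{weyl} relies on theorem \ref{long-norm}, not on the statement being proved here), but it is cleaner to avoid it: by theorem \ref{split-torus} the perfect pairing between characters and cocharacters of the split torus \( T \supseteq \Phi \) supplies, for each long \( \alpha \), some cocharacter \( \lambda_\alpha \) with \( \langle \beta, \lambda_\alpha \rangle \) of the same sign as \( \alpha \cdot \beta \) for all \( \beta \in \Phi \), and your argument goes through verbatim with \( \lambda_\alpha \) in place of \( \alpha^\vee \).
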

	\begin{proof}
		This easily follows from theorem
		\ref{long-norm} because the intersection of all parabolic group subschemes is still a group subscheme of type (RC)
		\cite[lemma 2]{diophantine}, but corresponding to the empty set of relative roots.
	\end{proof}

	\section{Weyl elements}
	
	\begin{lemma}
		\label{ssys-sgr}
		Let
		\( G \) be an isotropic reductive group scheme over
		\( K \) and
		\( \Psi \subseteq \Phi \) be a closed root subsystem, i.e. a subset with the properties
		\( \Psi = - \Psi \) and
		\( (\Psi + \Psi) \cap \Phi \subseteq \Psi \). Consider the fppf group subsheaf
		\(
		H
		=
		\langle
		U_\alpha
		\mid
		\alpha \in \Psi
		\rangle
		\). Then
		\( H \) is a closed reductive group subscheme of
		\( G \). Moreover,
		\( H \) is isotropic with the distinguished group subschemes
		\( L \cap H \) and
		\( U_\alpha \) for
		\( \alpha \in \Psi \) in the following generalized sense. Fppf locally there is a splitting of
		\( H \) such that these group subschemes are generated in the scheme sense by disjoint families of root subgroups (and by the maximal torus in the case of
		\( L \cap H \)) and the corresponding partition of every component of the absolute root system is described by an irreducible Tits index.
	\end{lemma}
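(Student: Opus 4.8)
The plan is to reduce to the split case, replace \( \Psi \) by an auxiliary subsystem \( \widetilde\Psi \) of the absolute root system \( \widetilde\Phi \), recognise \( H \) as the derived subgroup of a subsystem subgroup, and finally split off the irreducible components of \( \widetilde\Psi \). Since the statement is fppf local on \( \Spec K \), we may assume that \( G \) splits: \( G = \mathrm G^\Lambda(\widetilde\Phi, {-}) \) with standard maximal torus \( \widetilde T \), root subgroups \( \widetilde U_{\alpha'} \), and \( L \), \( U_\alpha \) built from a map \( u \colon \widetilde\Phi \sqcup \{ 0 \} \to \Phi \sqcup \{ 0 \} \) coming from an irreducible Tits index. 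Note that \( \Psi \) contains \( 2\alpha \) whenever \( \alpha \in \Psi \) and \( 2\alpha \in \Phi \), since \( 2\alpha = \alpha + \alpha \in (\Psi + \Psi) \cap \Phi \); hence \( u^{-1}(\Psi) = \{ \alpha' \in \widetilde\Phi \mid u(\alpha') \in \Psi \} \) already contains every absolute root indexing one of the generating root subgroups \( \widetilde U_{\alpha'} \) of \( H \). Set \( \widetilde\Psi = \widetilde\Phi \cap \mathbb Z\bigl( u^{-1}(\Psi) \bigr) \), a closed subsystem of \( \widetilde\Phi \) with \( \widetilde\Psi = \widetilde\Phi \cap \mathbb Z\widetilde\Psi \). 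Because \( u \) is compatible with addition of roots (this is clear from the description of each Tits index: \( u \) is governed by the linear grading cocharacter of \( \widetilde T \) together with a folding that preserves this grading), and because \( \Phi \cap \mathbb Z\Psi = \Psi \) for any closed subsystem \( \Psi \), one gets \( u(\widetilde\Psi) \subseteq \Psi \cup \{ 0 \} \); conversely \( u \) maps \( \widetilde\Psi \) onto \( \Psi \cup \{ 0 \} \).

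By the theory of reductive subgroups of type (RC) \cite[XXII, \S 5]{sga3}, the group subscheme \( M = \langle \widetilde T, \widetilde U_{\alpha'} \mid \alpha' \in \widetilde\Psi \rangle \) is closed reductive with maximal torus \( \widetilde T \) and root system \( \widetilde\Psi \), its root subgroups are precisely the \( \widetilde U_{\alpha'} \) with \( \alpha' \in \widetilde\Psi \), and its derived subgroup \( D(M) = \langle \widetilde U_{\alpha'} \mid \alpha' \in \widetilde\Psi \rangle \) is a closed semisimple group subscheme of \( G \), with maximal torus \( S = \widetilde T \cap D(M) \). Since \( \widetilde\Psi \) is generated, as a subsystem with \( \widetilde\Psi = \widetilde\Phi \cap \mathbb Z\widetilde\Psi \), by \( \bigcup_{\alpha \in \Psi} u^{-1}(\{ \alpha, 2\alpha \}) \), the subgroup \( D(M) \) is already generated by the corresponding root subgroups, so \( D(M) = \langle U_\alpha \mid \alpha \in \Psi \rangle = H \); in particular \( H \) is a closed reductive group subscheme of \( G \). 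Moreover \( U_\alpha = \langle \widetilde U_{\alpha'} \mid u(\alpha') \in \{ \alpha, 2\alpha \} \rangle \) for \( \alpha \in \Psi \) (all indexing roots lie in \( \widetilde\Psi \)), while \( L \cap H = \langle S, \widetilde U_{\alpha'} \mid \alpha' \in \widetilde\Psi,\ u(\alpha') = 0 \rangle \) is the Levi subgroup of \( H \) determined by the grading cocharacter; that \( L \cap H \) is no larger follows from the uniqueness of the decomposition \( U^- \times L \times U^+ \to G \) together with the analogous description \( L = \langle \widetilde T, \widetilde U_{\beta'} \mid u(\beta') = 0 \rangle \).

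Finally decompose \( \widetilde\Psi = \bigsqcup_j \widetilde\Psi^{(j)} \) into irreducible components. Then \( H \) is the central product of the split semisimple subgroups \( H^{(j)} = \langle \widetilde U_{\alpha'} \mid \alpha' \in \widetilde\Psi^{(j)} \rangle \), and \( L \cap H \) and every \( U_\alpha \) decompose into the analogous subgroups of the \( H^{(j)} \); the families of root subgroups generating \( L \cap H \) and the various \( U_\alpha \) are pairwise disjoint simply because \( u \) is a function. It remains to check that, for each \( j \), the restriction of \( u \) to \( \widetilde\Psi^{(j)} \) — equivalently the grading of \( H^{(j)} \) by the restricted cocharacter, together with the folding inherited from the decoration of the Tits index of \( G \) — is the datum of one of the irreducible Tits indices of \cite[table II]{index-field}; this is done by running through that classification and noting that a subconfiguration of an admissible index arising in this way is again admissible. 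Assembling the pieces gives the asserted generalized isotropic structure on \( H \). The only genuinely nontrivial step is this last combinatorial check; the scheme-theoretic content (identifying \( H \) with \( D(M) \), hence a closed reductive group subscheme) is routine given SGA3, and the identification of \( L \cap H \) and the \( U_\alpha \) is bookkeeping.
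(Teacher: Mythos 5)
The essential gap is at the step you explicitly defer. Saying that one concludes ``by running through that classification and noting that a subconfiguration of an admissible index arising in this way is again admissible'' is not an argument: admissibility of a Tits index is a nontrivial condition (it encodes realizability over a field with a prescribed anisotropic kernel, including the arithmetic constraints visible in the tables), and the whole content of the last assertion of the lemma is precisely that the datum induced on each component of \( \widetilde\Psi \) is admissible. The paper does not treat this as routine; it either requires an exhaustive case check (which you do not carry out) or, as in the paper's proof, a genuine structural argument: realize the given irreducible index by a simple group \( G' \) over a field \( \Bbbk \) with anisotropic \( L' \); show that \( (T'_{\mathrm{spl}} \cap H')^\circ \) is a maximal split torus of \( H' \) whose centralizer is \( L' \cap H' \) and whose root subgroups are the \( U'_\alpha \); then decompose \( H' / \Cent(H') \) into Weil restrictions of simple groups over finite separable extensions and use \cite{borel-tits} to see that each simple factor carries exactly the structure attached to a maximal split torus over a field, i.e.\ an honest irreducible Tits index. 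Some argument of this kind (or the full case analysis) is needed; as written, your final sentence restates the claim rather than proving it.

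There are also problems with your choice of \( \widetilde\Psi \). The set \( u^{-1}(\Psi) \) is not closed in \( \widetilde\Phi \) in general (already for \( \mathsf B_{n,1} \) with \( \Psi = \Phi \), the sum of a degree \( +1 \) and a degree \( -1 \) absolute root can be a degree \( 0 \) root), and the root system of \( H \) is the closure of \( u^{-1}(\Psi) \) under addition inside \( \widetilde\Phi \). Your \( \widetilde\Psi = \widetilde\Phi \cap \mathbb Z\bigl(u^{-1}(\Psi)\bigr) \) is a priori larger than this closure, and you rule that out only via the blanket claim that \( \Phi \cap \mathbb Z\Psi = \Psi \) for every closed subsystem, asserted without proof and applied both to \( \widetilde\Phi \) and to the possibly non-reduced relative system; this needs justification, and if \( \widetilde\Psi \) were strictly larger than the closure your \( D(M) \) would strictly contain \( H \). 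Moreover, even granting the equality, the inference ``\( \widetilde\Psi \) is generated as a subsystem by \( \bigcup_\alpha u^{-1}(\{\alpha, 2\alpha\}) \), hence \( D(M) \) is generated by the corresponding root subgroups, so \( D(M) = H \)'' is a non sequitur: generation on the level of lattices or subsystems does not give generation of the group sheaf; one needs the commutator formula or, as in the paper, the identification of \( H \) as the derived group of the type (RC) subgroup attached to the closure \cite{sga3}. Working with the closure from the outset also makes \( u(\widetilde\Psi) \subseteq \Psi \cup \{0\} \) immediate from linearity of \( u \) and closedness of \( \Psi \), so the lattice-theoretic detour is both unnecessary and the weakest link in your first two paragraphs.
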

	\begin{proof}
		Without loss of generality,
		\( G \) splits. The group subscheme
		\( H \) is semisimple reductive by
		\cite[\S XXII.5]{sga3}, namely, it is the scheme derived subgroup of the group subscheme
		\( H \widetilde{T} \) of type (RC). It corresponds to the absolute root subsystem
		\(
		\widetilde{\Psi}
		=
		u^{- 1}(\Psi)
		\subseteq
		\widetilde{\Phi}
		\). Moreover,
		\( H \) also splits with the maximal torus
		\( H \cap \widetilde{T} \), and
		\( U_\alpha \) are generated by its absolute root subgroups for
		\( \alpha \in \Psi \) (i.e. with roots from
		\( \widetilde{\Psi} \)). The intersection
		\( L_H = L \cap H \) is the centralizer of
		\( T \) in
		\( H \), so it is a reductive group scheme, and it is of type (RC) because
		\( H \cap \widetilde{T} \leq L_H \). Clearly, it is generated by
		\( H \cap \widetilde{T} \) and remaining absolute root subgroups.
		
		We claim that the partition of every component
		\( \widetilde{\Psi} \) induced by
		\( L_H \) and
		\( U_\alpha \) corresponds to an irreducible Tits index. This can be checked directly by considering all possible irreducible Tits indices of
		\( G \) and all maximal irreducible closed root subsystems of
		\( \Phi \). Alternatively, recall that every irreducible Tits index arises from a maximal isotropic structure of a simple reductive group scheme
		\( G' \) over a field
		\( \Bbbk \). The group subscheme
		\( L' \leq G' \) is anisotropic, i.e. it does not contain a non-central split torus, so
		\( L'_H = L' \cap H' \) and its image in
		\( H' / \Cent(H') \) are also anisotropic according to
		\cite[XXVI, corollary 6.12]{sga3}. Take the maximal split torus
		\( T'_{\mathrm{spl}} \leq \Cent(L') \). It is a maximal split torus in
		\( G' \),
		\( L' \) is its centralizer, and
		\( U'_\alpha \) are the corresponding root subgroups by
		\cite[XXVI, corollary 6.16 and 7.4.2]{sga3}. Then
		\( ( T'_{\mathrm{spl}} \cap H' )^\circ \) is a maximal split torus in
		\( H' \) because every split torus in
		\( H' \) has finite intersection with the ``complementary'' split torus
		\( \Cent_{ T'_{\mathrm{spl}} }(H') \). Here
		\( ({-})^\circ \) is the maximal subtorus of a group scheme of multiplicative type. The image of this torus in
		\( H' / \Cent(H') \) is maximal by
		\cite[proposition 4.25]{borel-tits}.
		
		Now consider the canonical decomposition
		\[
		H' / \Cent(H')
		=
		\prod_{i = 1}^n
		\mathrm{R}_{\Bbbk_i / \Bbbk}(H'_i)
		\]
		\cite[XXIV, proposition 5.10]{sga3}, where
		\( \Bbbk_i / \Bbbk \) are finite separable extensions,
		\( \mathrm{R}_{\Bbbk_i / \Bbbk} \) are Weil restrictions, and
		\( H'_i \) are simple reductive group schemes over
		\( \Bbbk_i \). Clearly,
		\( L'_H / \Cent(H') \), the image of
		\( ( T'_{\mathrm{spl}} \cap H' )^\circ \), and the isomorphic images of
		\( U'_\alpha \) in
		\( H' / \Cent(H') \) for
		\( \alpha \in \Psi \) decompose into products of group subschemes of
		\( \mathrm{R}_{\Bbbk_i / \Bbbk}(H'_i) \). The factors of
		\( L'_H / \Cent(H') \) and the images of
		\( U'_\alpha \) are themselves Weil restrictions of group subschemes of
		\( H'_i \), denote them by
		\( L'_{H, i} \) and
		\( U'_{\alpha, i} \). The factors of the image of
		\( ( T'_{\mathrm{spl}} \cap H' )^\circ \) are the maximal split tori of Weil restrictions of some maximal split tori
		\( T'_{ \mathrm{spl}, i } \leq L'_{H, i} \) by
		\cite[6.19 and 6.20]{borel-tits}, in particular, every
		\( L'_{H, i} \) is the centralizer of
		\( T'_{ \mathrm{spl}, i } \) in
		\( H'_i \). By construction
		\( L'_{H, i} \) are the root subgroups with respect to
		\( T'_{ \mathrm{spl}, i } \).
	\end{proof}
	
	Now we are ready to prove main results about Weyl elements.
	
	\begin{theorem}
		\label{weyl}
		Let
		\( G \) be an isotropic reductive group scheme over
		\( K \) and assume that the Tits index is not
		\(
		\TitsIndex{A}{1}{1}{1}{(1)}
		\cong
		\TitsIndex{B}{}{1}{1}{}
		\cong
		\TitsIndex{C}{}{1}{1}{(1)}
		\) or that
		\( G \) is simply connected. Then we have the following.
		\begin{enumerate}
			
			\item
			Every
			\( \alpha \)-Weyl element
			\(
			w
			\in
			U_\alpha(K)\, U_{- \alpha}(K)\, U_\alpha(K)
			\) in
			\( G(K) \) normalizes
			\( L \) and the standard split torus
			\( T \) from theorem
			\ref{split-torus}, it acts on
			\( T \) via the corresponding Euclidean reflection. Also,
			\( \up{w}{U_\beta} = U_{ s_\alpha(\beta) } \) as schemes.
			
			\item
			If
			\( g_1 \in U_\alpha(K) \) is a factor of a Weyl element, i.e. there are
			\( g_2 \in U_{- \alpha}(K) \) and
			\( g_3 \in U_\alpha(K) \) such that
			\( g_1 g_2 g_3 \) is
			\( \alpha \)-Weyl, then
			\( g_2 \) and
			\( g_3 \) are unique. In other words,
			\( G(K) \) has
			\textit{unique Weyl extensions} in the sense of
			\cite[\S 2.2.C]{wiedemann}. This is already known for isotropic rank at least
			\( 2 \) by
			\cite[proposition 5.20]{loos-neher}.
			
			\item
			The locus of
			\( x \in P_\alpha \) such that
			\( t_\alpha(x) \) is a factor of a Weyl element is an open subscheme
			\( P_\alpha^* \subset P_\alpha \) with non-empty fibers. In particular, all Weyl elements exists locally in Zariski topology.
			
		\end{enumerate}
	\end{theorem}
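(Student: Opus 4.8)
\emph{Proof plan.} Recall that $2\alpha\notin\Phi$, so $\{\pm\alpha\}$ is a closed subsystem of $\Phi$, and that with $w=g_1g_2g_3$ the element $w^{-1}=g_3^{-1}g_2^{-1}g_1^{-1}$ is again $\alpha$-Weyl. By lemma~\ref{ssys-sgr} the fppf sheaf $H=\langle U_\alpha,U_{-\alpha}\rangle$ is an isotropic reductive group scheme whose relative root system is, componentwise, of type $\mathsf A_1$ and given by an index from table~\ref{a1-list} or by $\mathsf E_{7,1}^{78}$; moreover every $\alpha$-Weyl element lying in $U_\alpha(K)\,U_{-\alpha}(K)\,U_\alpha(K)$ lies in $H(K)$. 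All three assertions are fppf-local on $\Spec K$ (the Tits index and the simple-connectedness hypothesis survive base change, so theorems~\ref{root-norm} and~\ref{split-torus} stay available), so I may pass freely to an fppf cover over which $G$, hence $H$, splits, and then use the explicit computations of \S\ref{a1-grading} in each rank-$1$ factor of $H$.

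For~(1): that $w$ normalizes $L$ follows from theorem~\ref{root-norm}, since for $\ell\in L(K')$ and any $\beta$ one has $\up{w\ell w^{-1}}{U_\beta}=\up w{\up\ell{\up{w^{-1}}{U_\beta}}}=\up w{\up\ell{U_{s_\alpha\beta}}}=\up w{U_{s_\alpha\beta}}=U_\beta$ (using that $w^{-1}$ is $\alpha$-Weyl and that $L$ normalizes each $U_\gamma$), whence $w\ell w^{-1}\in L(K')$, and the same with $w^{-1}$ gives $wLw^{-1}=L$. For the scheme-level statements I would split $G$: in a split rank-$1$ factor \S\ref{a1-grading} shows $w=t_+(x)t_-(-x^{-1})t_+(x)$ with $x$ invertible, and such $w$ visibly normalize the rank-$1$ canonical torus (acting by inversion $=s_\alpha$) and act on root subgroups by the displayed formulae; since $w\in H$ it also centralizes the subtorus of $T$ killed by $\alpha$, and together with the rank-$1$ torus this gives $wTw^{-1}=T$ with $w$ acting by $s_\alpha$. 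Descending, this holds over $K$, and $\up w{U_\beta}=U_{s_\alpha(\beta)}$ as schemes; the latter also follows from the reconstruction $U_\gamma=\bigcap_{\langle\gamma,\lambda\rangle>0}U_G(\lambda)$ (and the squaring-locus description when $2\gamma\in\Phi$) together with $\up w{U_G(\lambda)}=U_G(s_\alpha\lambda)$, while the torus statement also follows conceptually from the fact that the inner automorphism $c_w$ normalizes $A=\Aut(G,L,U_\bullet)$, hence $\Cent_G(A)$ and its maximal subtorus $T$.

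For~(2): let $w=g_1g_2g_3$ and $w'=g_1g_2'g_3'$ be $\alpha$-Weyl with the same first factor and put $v=w^{-1}w'=g_3^{-1}(g_2^{-1}g_2')g_3'\in H(K)$. By~(1), $\up v{U_\beta}=\up{w^{-1}}{\up{w'}{U_\beta}}=U_\beta$ as schemes for all $\beta$, so $v\in L(K)$ by theorem~\ref{root-norm}, whence $v\in(L\cap H)(K)$. Setting $u=g_2^{-1}g_2'\in U_{-\alpha}(K)$ and using that $L$ normalizes $U_\alpha$, from $v=g_3^{-1}ug_3'$ we obtain $v^{-1}u=\up{v^{-1}}{g_3}\cdot(g_3')^{-1}\in U_\alpha(K)$, while also $v^{-1}u\in(L\cap H)(K)\,U_{-\alpha}(K)$; hence $v^{-1}u=1$ because the multiplication $U_{-\alpha}\times(L\cap H)\times U_\alpha\to H$ is an open immersion. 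Thus $u=v\in U_{-\alpha}(K)\cap(L\cap H)(K)=\{1\}$, so $g_2'=g_2$, and then $v=g_3^{-1}g_3'=1$ forces $g_3'=g_3$.

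For~(3): by~(1) an $\alpha$-Weyl element in $U_\alpha U_{-\alpha}U_\alpha$ lies in $H$ and its conjugation swaps the two minimal parabolics of $H$; conversely any $w\in U_\alpha(K)U_{-\alpha}(K)U_\alpha(K)$ doing so normalizes the canonical tori of $H$ and of $G$ by $s_\alpha$, hence is $\alpha$-Weyl by the reconstruction of root subgroups. So $t_\alpha(x)$ is a factor of an $\alpha$-Weyl element exactly when $x$ occurs as the first parameter of such a $w$; over a cover splitting $G$, the formulae of \S\ref{a1-grading} identify this locus $P_\alpha^*$, in each rank-$1$ factor, with the non-vanishing locus of a determinant, a cubic norm, or a split quadratic form — an open subscheme which meets every fibre (e.g.\ $\det$ does not vanish identically on $\mat(d,{-})$, $\mathrm N(1)=1$, and so on). Openness and surjectivity of $P_\alpha^*\to\Spec K$ descend along the cover, so hold over $K$; and as $P_\alpha$ is free Zariski-locally, over each such piece $P_\alpha^*$ contains the distinguished ``identity'' element, so $P_\alpha^*$ admits a section over a Zariski cover and $\alpha$-Weyl elements exist locally in the Zariski topology. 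I expect the main effort to lie in~(3) — running through lemma~\ref{ssys-sgr} to identify the isotropic type of the rank-$1$ subgroup $H$ against the classification and checking in \S\ref{a1-grading} that the Weyl-factor locus is exactly the expected principal-type open set — and, secondarily, in the care needed throughout~(1) to pass from point-level identities to identities of schemes; once~(1) is established, (2) and the remainder of~(3) are formal.
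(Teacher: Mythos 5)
The central step of your argument for part~(1) is circular. Being an \( \alpha \)-Weyl element is, a priori, only a condition on conjugation of the abstract groups \( U_\beta(K) \) inside \( G(K) \); nothing guarantees that this condition survives base change, and upgrading it to the scheme-level statement (which is exactly what would make it fppf-local) is the very content of part~(1). So when you "pass freely to an fppf cover over which \( G \) splits" and then invoke the computations of \S\ref{a1-grading} to write \( w = t_+(x)\, t_-(- x^{-1})\, t_+(x) \) with \( x \) invertible, you are using the "only if" direction of those computations, whose hypothesis is that conjugation by \( w \) swaps \( P^+ \) and \( P^- \) \emph{as subschemes over the cover} --- precisely what is not yet known for your \( w \). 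The same circularity occurs in your point-level argument that \( w \) normalizes \( L \): you quantify over \( \ell \in L(K') \) and use \( \up{w^{-1}}{U_\beta(K')} = U_{s_\alpha(\beta)}(K') \), again the scheme-level property; with \( \ell \in L(K) \) only, you merely get normalization of \( L(K) \), not \( \up w L = L \). The paper closes exactly this gap by a comparison trick you are missing: it first constructs, Zariski-locally, a reference \( \alpha \)-Weyl element \( w_0 \) with the scheme-level properties (lemma~\ref{ssys-sgr} plus \S\ref{a1-grading} applied to \( H = \langle L, U_{-\alpha}, U_\alpha \rangle \), using that \( w_0 \) centralizes \( \Ker(\alpha) \) and inverts \( T_H \)), and then observes that the product of the given \( w \) with \( w_0 \) normalizes all root subgroups on \( K \)-points, hence lies in \( L \) by theorem~\ref{long-norm} --- whose formulation with a generating set \( X \) of \( K \)-points is exactly the device that converts the point-level hypothesis on \( w \) into a scheme-level conclusion. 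Since your parts~(2) and~(3) quote~(1) "as schemes", the gap propagates to them as written (though your big-cell uniqueness argument in~(2) is fine once \( v \in L(K) \) is secured, e.g.\ directly from theorem~\ref{long-norm} applied to \( w^{-1} w' \) on \( K \)-point generators).

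Two further points in~(3) need repair even granting~(1). First, before "descending openness" you must define \( P_\alpha^* \) functorially over \( K \); the paper does this intrinsically, without descent, by showing \( g_1 \) is a factor of a Weyl element iff \( g_1 \in U_{-\alpha}\, \dot s_\alpha L\, U_{-\alpha} \), an open condition inside \( \langle U_{-\alpha}, L, U_\alpha \rangle \) coming from the big cell of the scheme \( N = \bigsqcup_w \dot w L \) generated by \( L \) and all Weyl elements fppf-locally (this also gives uniqueness of \( g_2, g_3 \) at once). Second, your claim that over a Zariski cover trivializing \( P_\alpha \) the locus "contains the distinguished identity element" is unjustified: the Jordan-algebra identity exists only over the fppf/\'etale cover splitting \( G \), not over a Zariski localization where \( G \) may remain non-split. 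Zariski-local existence of Weyl elements should instead be extracted from openness together with non-emptiness of the fibers (a point of the special fiber where the relevant determinant, norm or quadratic form is nonzero lifts over the local ring to a point where it is a unit), which is how the paper argues.
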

	\begin{proof}
		Let
		\( \alpha \in \Phi \setminus \frac{1}{2} \Phi \) be a non-ultrashort root. Firstly we check that fppf locally there is an
		\( \alpha \)-Weyl element satisfying the first property. Indeed, let
		\( H = \langle L, U_{- \alpha}, U_\alpha \rangle \) be a group subscheme of type (RC), it is graded by
		\( \mathsf{A}_1 \) by lemma
		\ref{ssys-sgr}. Using results from
		\S \ref{a1-grading} including theorem
		\ref{e7-sc} we see that fppf locally there is an
		\( \alpha \)-Weyl element
		\( w \) such that
		\( \up{w}{ U_{\pm \alpha} } = U_{\mp \alpha} \) and
		\( \up{w}{L_H} = L_H \), where
		\( L_H = L \cap H \) is a part of the isotropic structure on
		\( H \). The torus
		\( T \) decomposes into the product of
		\( T_H \) and
		\( \Ker(\alpha) \) in the sense of fppf sheaves, where
		\( T_H \) is the canonical split torus of
		\( H \) and
		\( \alpha \) is considered as a homomorphism
		\( T \to \mathbb{G}_{\mathrm{m}} \). Clearly,
		\( w \) centralizes
		\( \Ker(\alpha) \) and inverts
		\( T_H \), so it normalizes
		\( T \) and preserves the root system
		\( \Phi \). It follows that
		\( w \) acts on
		\( T \) by the reflection
		\( s_\alpha \), so it preserves
		\( L \) and permutes
		\( U_\beta \) in the required way. In particular,
		\( T_H \leq T \) is the centralizer of the orthogonal complement of
		\( \alpha \) in the weight lattice.
		
		Now we prove the theorem. If
		\( w \in G(K) \) is an
		\( \alpha \)-Weyl element, then fppf locally choose an
		\( \alpha \)-Weyl element
		\( w_0 \) as above. The product
		\( w w_0 \) normalizes all root subgroups
		\( U_\beta(K) \), so it lies in
		\( L(K) \) by theorem
		\ref{long-norm} (as in the proof of theorem
		\ref{root-norm}). It follows that
		\( w \) satisfies the first claim.
		
		Let
		\( N \leq G \) be the group subscheme generated by
		\( L \) and all Weyl elements fppf locally. It is easy to see that
		\( N \) is a closed smooth affine group subscheme of
		\( G \),
		\( L \) is its normal group subscheme, and
		\( L / G \) is canonically isomorphic to the Weyl group of
		\( \Phi \). So as a scheme
		\( N \) is isomorphic to the disjoint union of
		\( | \mathrm{W}(\Phi) | \) copies of
		\( L \). Let us denote these copies by
		\( \dot{w} L \) for
		\( w \in \mathrm{W}(\Phi) \).
		
		An element
		\( g_1 \in U_\alpha(K) \) is a factor of an
		\( \alpha \)-Weyl element if and only if there are
		\( g_2 \in U_{- \alpha}(K) \),
		\( g_3 \in U_\alpha(K) \) such that
		\( g_1 g_2 g_3 \in ( \dot{s}_\alpha L )(K) \), i.e.
		\[
		g_1
		\in
		U_{- \alpha}(K)\,
		(\dot{s}_\alpha L)(K)\,
		U_{- \alpha}(K).
		\]
		But the product
		\(
		U_{- \alpha}
		\times
		\dot{s}_\alpha L
		\times
		U_{- \alpha}
		\) embeds to the reductive group scheme
		\( \langle U_{- \alpha}, L, U_\alpha \rangle \) as an open subscheme (after multiplication by an
		\( \alpha \)-Weyl element from the right this is the product
		\( U_{- \alpha} \times L \times U_\alpha \)), so the locus of such
		\( g_1 \) is open and the factors
		\( g_2 \),
		\( g_3 \) are unique. The locus also has non-empty fibers because all Weyl elements exist fppf locally.
	\end{proof}
	
	It turns out that
	\( \Phi \) as a subset of the character lattice of
	\( T \) naturally completes to a root datum.
	
	\begin{lemma}
		\label{coroots}
		Let
		\( G \) be an isotropic reductive group scheme over
		\( K \). Then for every root
		\( \alpha \) there is a well defined
		\textit{coroot}
		\(
		\alpha^{\vee}
		\colon
		\mathbb{G}_{\mathrm{m}}
		\to
		T
		\) such that
		\(
		\langle \beta, \alpha^\vee \rangle
		=
		2 (\alpha \cdot \beta) / (\alpha \cdot \alpha)
		\) for all roots
		\( \beta \). Coroots form the dual root system
		\( \Phi^{\vee} \). If
		\( G \) is simply connected and
		\( \Phi \) is of type
		\( \mathsf{A}_1 \), then
		\( \alpha^{\vee} \) is an isomorphism. If
		\( G \to G' \) is a central isogeny, then coroots of
		\( G \) map to coroots of
		\( G' \). If
		\( \Psi \subset \Phi \) is a closed root subsystem and
		\( \alpha \in \Psi \), then coroots of the reductive group subscheme
		\( \langle U_\beta \mid \beta \in \Psi \rangle \) coincide with the corresponding coroots of
		\( G \).
	\end{lemma}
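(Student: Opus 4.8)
The plan is to obtain \( \alpha^\vee \) from Weyl elements and to deduce all the functoriality from a uniqueness statement. First I would note that a coroot is unique once it exists: by theorem \ref{split-torus} the character lattice \( X^*(T) \) has rank \( \mathrm{rk}(\Phi) \) and contains \( \Phi \), so \( \Phi \) spans \( X^*(T) \otimes \mathbb Q \), and a cocharacter of \( T \) is determined by its pairings against a \( \mathbb Q \)-spanning family of characters. Consequently, as soon as existence with the required pairing is established, the identity \( \langle \beta, \alpha^\vee \rangle = 2\frac{\alpha\cdot\beta}{\alpha\cdot\alpha} \) forces \( \alpha \mapsto \alpha^\vee \) to be the classical duality map, so its image is the dual root system \( \Phi^\vee \) and, together with \( \Phi \subseteq X^*(T) \), makes \( (X^*(T), \Phi, X_*(T), \Phi^\vee) \) a root datum. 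The same uniqueness reduces the isogeny and closed-subsystem claims to checking that the relevant cocharacter still satisfies the pairing identity (which refers only to the Euclidean structure of \( \Phi \)), and it lets us build \( \alpha^\vee \) Zariski locally and glue.

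Before constructing \( \alpha^\vee \) I would make two reductions. If \( \pi\colon G^{\mathrm{sc}} \to G \) is the simply connected central cover, then \( \pi \) carries the root subgroups of \( G^{\mathrm{sc}} \) onto those of \( G \) (the images of the distinguished subgroups form the isotropic structure, and \( [L, U_\alpha] = U_\alpha \) since \( T \) acts on \( U_\alpha \) by the nontrivial character \( \alpha \)), hence it carries Weyl elements to Weyl elements; so a coroot of \( G^{\mathrm{sc}} \) pushes forward to a cocharacter of \( T \) with the right pairing. This reduces existence to the simply connected case and simultaneously covers the Tits index \( \up 1 {\mathsf A^{(1)}_{1,1}} \cong \mathsf B_{1,1} \cong \mathsf C^{(1)}_{1,1} \) excluded from theorem \ref{weyl}. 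Next, if \( \alpha \) is multipliable (so \( \Phi \) is of type \( \mathsf{BC} \) and \( 2\alpha \in \Phi \)) I set \( \alpha^\vee = (2\alpha)^\vee \) composed with the squaring isogeny \( \mathbb G_{\mathrm m} \to \mathbb G_{\mathrm m} \); the pairing identity for \( \alpha^\vee \) follows from that for \( (2\alpha)^\vee \) and the elementary equality \( 2\frac{\alpha\cdot\beta}{\alpha\cdot\alpha} = 2\cdot 2\frac{(2\alpha)\cdot\beta}{(2\alpha)\cdot(2\alpha)} \). So it remains to treat a root \( \alpha \) with \( 2\alpha \notin \Phi \) in a simply connected \( G \).

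Now work Zariski locally, so that \( G \) splits and, by theorem \ref{weyl}, the scheme \( P_\alpha^* \) of parameters \( x \) with \( t_\alpha(x) \) a factor of a Weyl element has a section over \( K \). By unique Weyl extensions (theorem \ref{weyl}(2)) every \( x \in P_\alpha^* \) determines a unique \( \alpha \)-Weyl element \( w_x \) whose first factor is \( t_\alpha(x) \), and I would set \( \alpha^\vee(\mu) = w_x\, w_{x\cdot\mu^{-1}}^{-1} \) for \( \mu \in K^* \) (up to replacing \( \mu \) by \( \mu^{-1} \), the orientation being fixed by \( \langle\alpha,\alpha^\vee\rangle = 2 \)). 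Since \( w_x \) and \( w_{x\cdot\mu^{-1}} \) both act on \( T \) as the reflection \( s_\alpha \) (theorem \ref{weyl}(1)), their ratio centralizes \( T \), hence lies in \( L \). To see that it in fact lies in \( T \), is independent of \( x \), and is multiplicative in \( \mu \), I would pass to the rank \( 1 \) subgroup \( \langle L, U_{-\alpha}, U_\alpha \rangle \), which is \( \mathsf A_1 \)-graded (lemma \ref{ssys-sgr}), and use the explicit descriptions of \S\ref{a1-grading} and theorem \ref{e7-sc}: there \( w_x = t_+(x)\,t_-(-x^{-1})\,t_+(x) \) with \( x \) invertible in the relevant quadratic Jordan algebra, and a direct computation — using \( \mathrm U_{x\cdot\mu} = \mu^2\,\mathrm U_x \), \( (x\cdot\mu)^{-1} = \mu^{-1}x^{-1} \) and the fundamental formula \( \mathrm U_x \mathrm U_{x^{-1}} = \mathrm{id} \) in the exceptional case, and analogous matrix and Clifford identities otherwise — shows \( w_x\, w_{x\cdot\mu^{-1}}^{-1} \) equals the standard torus element scaling \( t_\alpha \) by \( \mu^{2} \). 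This yields \( \alpha^\vee\colon \mathbb G_{\mathrm m} \to T \) with \( \langle\alpha,\alpha^\vee\rangle = 2 \). For an arbitrary \( \beta \in \Phi \) I would invoke the description of the rank \( 1 \) subtorus containing the image of \( \alpha^\vee \) from the proof of theorem \ref{weyl}: it is the common kernel of all characters orthogonal to \( \alpha \), so \( \langle{-}, \alpha^\vee\rangle \) kills \( \alpha^\perp \) and is therefore a scalar multiple of \( \chi \mapsto \chi\cdot\alpha \); the scalar is pinned down by \( \langle\alpha,\alpha^\vee\rangle = 2 \), giving \( \langle\beta,\alpha^\vee\rangle = 2\frac{\alpha\cdot\beta}{\alpha\cdot\alpha} \).

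It remains to collect the assertions. Part (2) is the formal consequence noted in the first paragraph. For part (3), when \( \Phi = \mathsf A_1 \) and \( G \) is simply connected the models of \S\ref{a1-grading} (including theorem \ref{e7-sc}) exhibit \( T \) canonically as \( \mathbb G_{\mathrm m} \) and identify \( \alpha^\vee \) with this very identification, so it is an isomorphism. Part (4) holds because a central isogeny carries Weyl elements to Weyl elements, hence \( w_x\,w_{x\cdot\mu^{-1}}^{-1} \) to the corresponding ratio downstairs (and for multipliable \( \alpha \) this propagates through \( \alpha^\vee = (2\alpha)^\vee \circ (\text{squaring}) \)); alternatively one may simply appeal to uniqueness. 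Part (5) holds because \( \alpha^\vee \) was built entirely from Weyl elements of \( \alpha \), which lie in \( \langle U_\gamma \mid \gamma \in \Psi\rangle \), and the relevant rank \( 1 \) subtorus is common to \( T \) and to the canonical torus of that subgroup, so the two coroots coincide. The main obstacle I anticipate is the uniform rank \( 1 \) computation that \( w_x\,w_{x\cdot\mu^{-1}}^{-1} \) is the expected torus element: it must be carried out in each family of \S\ref{a1-grading}, with care about the exact \( \mu \)-power and the sign normalization, and one must also check that the rank \( 1 \) subgroup appearing for a divisible root \( \alpha \) (one with \( \alpha/2 \in \Phi \)) is again among the simply connected models listed there.
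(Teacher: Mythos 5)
Your proposal is correct and follows essentially the same route as the paper: reduce to the rank-one subgroup generated by \( U_{\pm\alpha} \) (lemma \ref{ssys-sgr}), read off the coroot from the explicit \( \mathsf A_1 \)-models of \S\ref{a1-grading} and theorem \ref{e7-sc}, and deduce the pairing identity from the position of the rank-one torus \( T_H \le T \) and the \( s_\alpha \)-action established in (the proof of) theorem \ref{weyl}, with uniqueness following since \( \Phi \) spans the character lattice rationally. The only differences are presentational: you realize \( \alpha^\vee \) via the Steinberg-style ratio \( w_x\, w_{x\cdot\mu^{-1}}^{-1} \) and then verify it against the same models, and you spell out the simply connected and multipliable-root reductions that the paper treats tersely.
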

	\begin{proof}
		If
		\( \Phi \) is of type
		\( \mathsf{A}_1 \), then this follows from results of
		\S \ref{a1-grading} including theorem
		\ref{e7-sc}.
		
		In general to prove that a coroot exist we can assume that
		\( 2 \alpha \notin \Phi \). Let
		\( H = \langle U_{- \alpha}, U_\alpha \rangle \) be the reductive group subscheme with
		\( \mathsf{A}_1 \)-grading as in the proof of theorem
		\ref{weyl}. We have the coroot
		\(
		\alpha^{\vee}
		\colon
		\mathbb{G}_{\mathrm{m}}
		\to
		T_H
		\), where
		\( T_H \) is the standard split torus in
		\( H \). By theorem
		\ref{weyl} and its proof,
		\( \alpha^{\vee} \) spans the eigenspace of
		\( s_\alpha \) in the cocharacter lattice of
		\( T \) with the eigenvalue
		\( - 1 \). It follows that the 
		\( \alpha^{\vee} \) is the required coroot. All other properties are now obvious.
	\end{proof}
	
	\begin{theorem}
		\label{squares}
		Let
		\( G \) be an isotropic reductive group scheme over
		\( K \) and assume that the Tits index is not
		\(
		\TitsIndex{A}{1}{1}{1}{(1)}
		\cong
		\TitsIndex{B}{}{1}{1}{}
		\cong
		\TitsIndex{C}{}{1}{1}{(1)}
		\) or that
		\( G \) is simply connected. Let also
		\( \alpha \in \Phi \setminus \frac{1}{2} \Phi \) be a non-ultrashort root. Then every
		\( \alpha \)-Weyl element
		\( w_\alpha \) has the square
		\( \alpha^{\vee}(- 1) \), in particular,
		\[
		\up{ w_\alpha^2 }{ t_\beta(x) }
		=
		\begin{cases}
			t_\beta(x),
			&
			2
			(\alpha \cdot \beta)
			/
			(\alpha \cdot \alpha)
			\text{ is even},
			\\
			t_\beta(- x),
			&
			2
			(\alpha \cdot \beta)
			/
			(\alpha \cdot \alpha)
			\text{ is odd and }
			2 \beta \notin \Phi,
			\\
			t_\beta(x \cdot (- 1)),
			&
			2
			(\alpha \cdot \beta)
			/
			(\alpha \cdot \alpha)
			\text{ is odd and } 2 \beta \in \Phi
		\end{cases}
		\]
		for every
		\( \beta \in \Phi \).
	\end{theorem}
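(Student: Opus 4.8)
The plan is to reduce the statement to the case where $\Phi$ is of type $\mathsf A_1$ and $G$ is simply connected, where $w_\alpha^2$ can be read off from the explicit models of \S\ref{a1-grading}, and then to deduce the displayed conjugation formula from the action of the canonical split torus on root subgroups. First I would pass to the reductive group subscheme $H = \langle U_\alpha, U_{-\alpha}\rangle$. Since $2\alpha \notin \Phi$, the set $\Psi = \{\pm\alpha\}$ is a closed root subsystem, so by lemma \ref{ssys-sgr} the group $H$ is reductive with an isotropic structure of relative type $\mathsf A_1$ whose root subgroups are $U_{\pm\alpha}$, and by lemma \ref{coroots} its coroot $\alpha^\vee$ coincides with the coroot in $G$. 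As $w_\alpha \in U_\alpha(K)\, U_{-\alpha}(K)\, U_\alpha(K) \subseteq H(K)$ and $\up{w_\alpha}{U_{\pm\alpha}} = U_{\mp\alpha}$ by definition of an $\alpha$-Weyl element, $w_\alpha$ is an $\alpha$-Weyl element of $H$; hence it suffices to prove $w_\alpha^2 = \alpha^\vee(-1)$ inside $H$.

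Next I would choose a central isogeny $\pi \colon \widehat H \to H$ with $\widehat H$ simply connected. Since $\pi$ restricts to isomorphisms on the unipotent radicals $\widehat U^{\pm} \to U^{\pm}$, lifting the three factors gives an element $\widehat w_\alpha \in \widehat U^+(K)\, \widehat U^-(K)\, \widehat U^+(K)$ over $w_\alpha$, and $\widehat w_\alpha$ is again an $\alpha$-Weyl element (conjugation commutes with $\pi$, and $\pi$ is an isomorphism on the root subgroups); since $w_\alpha^2 = \pi(\widehat w_\alpha^2)$ and coroots are compatible with $\pi$ by lemma \ref{coroots}, we may assume $H$ is simply connected and $\mathsf A_1$-graded. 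Then $H$ is one of the groups explicitly constructed in \S\ref{a1-grading} (including theorem \ref{e7-sc}), a Weyl element has the form $w = t_+(x)\, t_-(y)\, t_+(z)$ with $x = z$ invertible and $y = -x^{-1}$, and I would compute $w^2$ directly from the displayed formulas (the answer will not depend on the parameter $x$). In every case $\up w{t_+(p)} = t_-(f(p))$ and $\up w{t_-(q)} = t_+(g(q))$ with $g \circ f = f \circ g = \id$, so $w^2$ centralizes $U_{\pm\alpha}$ and lies in $\Cent(H)(K)$; and a short computation with the natural module (for $\mathbb E_7^{\mathrm{sc}}$: $w^2(r, b, s, c, t) = (-r, -b, s, -c, -t)$, using $\mathrm U_{x^{-1}} = \mathrm U_x^{-1}$ and the self-adjointness of $\mathrm U_x$ with respect to $\mathrm T$) identifies this central element with $\alpha^\vee(-1)$, via lemma \ref{coroots} (so that $\alpha^\vee \colon \mathbb G_{\mathrm m} \to T$ is an isomorphism normalized by $\langle\alpha,\alpha^\vee\rangle = 2$) together with the explicit form of $T$ in each model (e.g.\ $\alpha^\vee(-1) = d(-1)$ for $\mathbb E_7^{\mathrm{sc}}$).

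Finally, once $w_\alpha^2 = \alpha^\vee(-1) \in T(K) \leq G(K)$, the displayed formulas follow from the action of $T$ on root subgroups in theorem \ref{split-torus} (which extends to arbitrary $G$ by passing to the simply connected cover): $\up{w_\alpha^2}{t_\beta(x)} = t_\beta\bigl(x \cdot \beta(\alpha^\vee(-1))\bigr)$ with $\beta(\alpha^\vee(-1)) = (-1)^{\langle\beta,\alpha^\vee\rangle} = (-1)^{2\frac{\alpha \cdot \beta}{\alpha \cdot \alpha}}$; an even exponent gives $t_\beta(x)$, while an odd exponent gives $t_\beta(-x)$ when $2\beta \notin \Phi$ (using the $K$-module structure on $U_\beta$) and $t_\beta(x \cdot (-1))$ when $2\beta \in \Phi$ (using the $K^\bullet$-action on $P_\beta$). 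I expect the main obstacle to be the $\mathbb E_7^{\mathrm{sc}}$ computation together with the uniform identification of $w^2$ with $\alpha^\vee(-1)$: one has to unwind the coroot in each classical model and verify the cubic-norm identities feeding the exceptional case, all of which are recorded in \S\ref{a1-grading}. I would also track that the hypothesis on the Tits index is compatible with the reductions; in fact the argument ultimately concerns only simply connected $\mathsf A_1$-graded group schemes, where no case is excluded.
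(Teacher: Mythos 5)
Your overall route coincides with the paper's: reduce to the rank-one subgroup \( H = \langle U_{-\alpha}, U_\alpha \rangle \) via lemma \ref{ssys-sgr}, pass to its simply connected central cover using the compatibility of coroots from lemma \ref{coroots}, identify \( w_\alpha^2 \) with \( \alpha^\vee(-1) \) case by case through the explicit models of \S\ref{a1-grading} and theorem \ref{e7-sc}, and then read off the displayed conjugation formula from the torus action of theorem \ref{split-torus}. The reduction steps and the final step are sound.

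The gap is in the case analysis, precisely where the paper has to work hardest. Knowing \( \up w{t_+(p)} = t_-(f(p)) \), \( \up w{t_-(q)} = t_+(g(q)) \) with \( g \circ f = \id \) only shows that \( w^2 \) is central in \( H \); to identify \emph{which} central element it is, you need a representation faithful on the relevant part of the center. Your ``short computation with the natural module'' does this for \( \mathbb{SL}_{2d} \), \( \mathbb{S}\mathrm{p}_{2d} \), and for \( \mathbb E_7^{\mathrm{sc}} \) (where \( W \) sees the center), but for the indices \( \mathsf B_{n,1} \), \( \up 1 {\mathsf D^{(1)}_{n,1}} \) and \( \up 1 {\mathsf D^{(d)}_{d,1}} \) the simply connected group is a spin group and \( w^2 \) lies in (or differs by) the kernel \( \mu_2 \) of \( \mathrm{Spin} \to \mathrm{SO} \): in the \( \mathsf B_{n,1} \) case \( w^2 = -1 \) in the Clifford algebra acts trivially on the quadratic module, and in the \( \up 1 {\mathsf D^{(d)}_{d,1}} \) case the vector representation only determines \( w^2 \) up to a sign, so it cannot separate \( \alpha^\vee(-1) \) from \( -\alpha^\vee(-1) \) (or from \( 1 \)). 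For \( \mathsf B_{n,1} \) and \( \up 1 {\mathsf D^{(1)}_{n,1}} \) this is repaired by computing inside the Clifford algebra with the explicit element \( w = x e_- + q_0(x)^{-1} x e_+ \) recorded in \S\ref{a1-grading}. But for \( \up 1 {\mathsf D^{(d)}_{d,1}} \) there is no closed Clifford-algebra formula for \( w = t_+(x)\, t_-(-x^{-1})\, t_+(x) \) with a general invertible alternating \( x \), and your assertion that ``the answer will not depend on the parameter \( x \)'' is exactly the point that needs an argument there. The paper supplies the missing idea: \( L \) acts Zariski-locally transitively on the scheme of Weyl elements (all invertible alternating matrices are locally congruent), so one may take the standard symplectic \( x \), compute \( w^2 = \prod_{i}(e_{-i}e_i - e_i e_{-i}) = \alpha^\vee(-1) \) in the Clifford algebra, and conclude for all Weyl elements because the square is central. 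Without this (or an equivalent) argument your case analysis does not close.
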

	\begin{proof}
		Without loss of generality,
		\( G \) splits. Let
		\( H = \langle U_{- \alpha}, U_\alpha \rangle \) be the isotropic reductive group subscheme with the relative root system
		\( \mathsf{A}_1 \) by lemma
		\ref{ssys-sgr}. We calculate
		\( w_\alpha^2 \) in the simply connected reductive finite central covering of
		\( H \) using results from
		\S \ref{a1-grading}.
		
		In the case of
		\( \TitsIndex{A}{1}{2 d - 1}{1}{(d)} \) for
		\( d \geq 1 \) and
		\( \TitsIndex{A}{2}{2 d - 1}{1}{(d)} \) for
		\( d \geq 2 \) we have
		\(
		w_\alpha
		=
		t_\alpha(x)\,
		t_{- \alpha}( - x^{- 1} )\,
		t_\alpha(x)
		\) for canonical choices of the parameter modules
		\( P_{\pm \alpha} \), so
		\( w_\alpha^2 = \alpha^{\vee}(- 1) = - 1 \) in the standard representation
		\( M_{-} \oplus M_{+} \). For
		\( \TitsIndex{C}{}{d}{1}{(d)} \) for
		\( d = 2^k \geq 1 \) we again have
		\(
		w_\alpha
		=
		t_\alpha(x)\,
		t_{- \alpha}( - x^{- 1} )\,
		t_\alpha(x)
		\) and
		\( w_\alpha^2 = \alpha^{\vee}(- 1) = - 1 \) in the standard representation. In the cases
		\( \TitsIndex{B}{}{n}{1}{} \) for
		\( n \geq 1\) and
		\(
		\TitsIndex{D}{1}{n}{1}{(1)}
		\sim
		\TitsIndex{D}{2}{n}{1}{(2)}
		\) for
		\( n \geq 3 \) we have
		\( w_\alpha = x e_{-} + q_0(x)^{- 1} x e_{+} \) in the Clifford algebra, so
		\( w_\alpha^2 = \alpha^{\vee}(- 1) = - 1 \). In the exceptional case
		\( \TitsIndex{E}{}{7}{1}{78} \) we use theorem
		\ref{e7-sc}, so
		\(
		w_\alpha
		=
		t_{+}(x)\, t_{-}( - x^{- 1} )\, t_{+}(x)
		\) has the square
		\[
		w_\alpha^2
		=
		d(- 1)
		=
		\alpha^{\vee}(- 1)
		\colon
		(r, b, s, c, t)
		\mapsto
		(- r, - b, s, - c, - t).
		\]
		
		Finally, in the case
		\( \TitsIndex{D}{1}{d}{1}{(d)} \) for
		\( d = 2^k \geq 4 \) we do not have a nice formula for
		\( w_\alpha \) as an element of the Clifford algebra. Fortunately, in this case
		\( L \) acts transitively on the scheme of Weyl elements because all symplectic matrices are congruent Zariski locally. Take the Weyl element
		\(
		w_\alpha
		=
		t_{+}(x)\, t_{-}( - x^{- 1} )\, t_{+}(x)
		\) with
		\(
		x
		=
		- x^{- 1}
		=
		e_{1 2} - e_{2 1} + e_{3 4} - e_{4 3} + \ldots
		\in
		\mat(d, K)
		\). We see that
		\(
		w_\alpha^2
		=
		\prod_{i = 1}^d
		( e_{- i} e_i - e_i e_{- i} )
		=
		\alpha^{\vee}(- 1)
		\) is central, so the same formula holds for all
		\( \alpha \)-Weyl elements.
	\end{proof}
	
	Theorem
	\ref{squares} does not hold for ultrashort roots. For example, consider the split simply connected isotropic reductive group scheme with the Tits index
	\( \TitsIndex{A}{2}{2}{1}{(1)} \), it has root elements
	\begin{align*}
		t_{+}(x, y, z)
		&=
		\Bigl(
		\begin{smallmatrix}
			1 & x & y
			\\
			0 & 1 & z
			\\
			0 & 0 & 1
		\end{smallmatrix}
		\Bigr),
		&
		t_{-}(x, y, z)
		&=
		\Bigl(
		\begin{smallmatrix}
			1 & 0 & 0
			\\
			x & 1 & 0
			\\
			y & z & 1
		\end{smallmatrix}
		\Bigr).
	\end{align*}
	The product
	\(
	w
	=
	t_{+}(x, y, z)\,
	t_{-}(x', y', z')\,
	t_{+}(x'', y'', z'')
	\) is a Weyl element if and only if
	\begin{align*}
		x y z - y^2 &\in K^*,
		\\
		x' &= z / (y - x z),
		&
		y' &= 1 / (x z - y),
		&
		z' &= - x / y,
		\\
		x'' &= x (x z - y) / y,
		&
		y'' &= y,
		&
		z'' &= y z / (x z - y),
	\end{align*}
	in this case
	\begin{align*}
		w
		&=
		\biggl(
		\begin{smallmatrix}
			0 & 0 & y
			\\
			0 & (y - x z) / y &0
			\\
			1 / (x z - y) & 0 & 0
		\end{smallmatrix}
		\biggr),
		&
		w^2
		&=
		\biggl(
		\begin{smallmatrix}
			y / (x z - y) & 0 & 0
			\\
			0 & (x z - y)^2 / y^2 & 0
			\\
			0 & 0 & y / (x z - y)
		\end{smallmatrix}
		\biggr).
	\end{align*}
	
	The case of adjoint isotropic reductive group schemes with the Tits index
	\(
	\TitsIndex{A}{1}{1}{1}{(1)}
	\cong
	\TitsIndex{B}{}{1}{1}{}
	\cong
	\TitsIndex{C}{}{1}{1}{(1)}
	\) is indeed exceptional in theorems
	\ref{long-norm},
	\ref{root-norm},
	\ref{weyl}, and
	\ref{squares}. Every such group scheme
	\( G \) is the special orthogonal group scheme of the quadratic form
	\( q(x, y, z) = x z + y^2 \) on the module
	\( M^\vee \oplus K \oplus M \), where
	\( M \) is a finitely generated projective module of constant rank
	\( 1 \),
	\( M^\vee \) is the dual module, and the products
	\( M \times M^\vee \to K \),
	\( M^\vee \times M \to K \) denote the canonical pairing. The distinguished subgroups and root elements of
	\( G(K) \) are
	\begin{align*}
		L(K) = T(K)
		&=
		\Bigl\{
		\Bigl(
		\begin{smallmatrix}
			\lambda & 0 & 0
			\\
			0 & 1 & 0
			\\
			0 & 0 & \lambda^{- 1}
		\end{smallmatrix}
		\Bigr)
		\mid
		\lambda \in K^*
		\Bigr\},
		\\
		t_{+}(x)
		&=
		\Bigl(
		\begin{smallmatrix}
			1 & - 2 x & - x^{\otimes 2}
			\\
			0 & 1 & x
			\\
			0 & 0 & 1
		\end{smallmatrix}
		\Bigr)
		\text{ for } x \in M,
		\\
		t_{-}(x)
		&=
		\Bigl(
		\begin{smallmatrix}
			1 & 0 & 0
			\\
			- x & 1 & 0
			\\
			- x^{\otimes 2} & 2 x & 1
		\end{smallmatrix}
		\Bigr)
		\text{ for } x \in M^\vee.
	\end{align*}
	Here we use that the endomorphism ring of our module is just the generalized matrix ring
	\[
	\Bigl(
	\begin{smallmatrix}
		K & M & M^{\otimes 2}
		\\
		M^{\vee} & K & M
		\\
		M^{{\vee} \otimes 2} & M^\vee & K
	\end{smallmatrix}
	\Bigr).
	\]
	If
	\( X \subseteq M \) is a generating subset, then the group scheme
	\( N = \{ g \mid \up{g}{ t_{+}(x) } \in U_{+} \} \) decomposes into the semidirect product
	\[
	U^{+}
	\rtimes
	\{ t_{-}(a) \mid a^{\otimes 2} = 2 a = 0 \}
	\rtimes
	L
	\]
	and coincides with the scheme normalizer of
	\( U^{+} \). Indeed, if
	\( t_{-}(a) \) conjugates
	\( t_{+}(X) \) into
	\( U_{+}(K) \), then
	\( a (a X) = 2 a X = 0 \), i.e.
	\( a^{\otimes 2} = 2 a = 0 \). The group subscheme
	\( N \) is generated by
	\( P^{+} \) and such root elements Zariski locally by the Gauss decomposition. In particular,
	\( N = P^{+} \) as a scheme if and only if
	\( 2 \in K^* \), and over any field
	\( N(\Bbbk) = P^{+}(\Bbbk) \). If follows that all Weyl elements have the form
	\(
	w
	=
	t_{+}(x + u)\,
	t_{-}( - x^{- 1} )\,
	t_{+}(x + v)
	\), where
	\( x \) is invertible,
	\( u^{\otimes 2} = 2 u = v^{\otimes 2} = 2 v = 0 \), so they exist if and only if
	\( G \) splits. Finally,
	\begin{align*}
		w
		&=
		\Bigl(
		\begin{smallmatrix}
			0 & 0 & - x^2
			\\
			u x^{- 2} & - 1 & v
			\\
			- x^{- 2} & 0 & 0
		\end{smallmatrix}
		\Bigr),
		&
		w^2
		&=
		\Bigl(
		\begin{smallmatrix}
			1 & 0 & 0
			\\
			(u + v) x^{- 2} & 1 & u + v
			\\
			0 & 0 & 1
		\end{smallmatrix}
		\Bigr).
	\end{align*}

	\bibliographystyle{plain}
	\bibliography{references}

\end{document}